\newcommand{\fn}{{\mathrm{fn}}}
\newcommand{\init}{{\mathrm{in}}}
\newcommand{\GL}{\text{GL}^+(n,\mathbb{R})}
\DeclareMathOperator*{\oprod}{\overleftarrow{\prod}}
\newtheorem{thm}{Theorem}
\newtheorem{remark}{Remark}
\newtheorem{prop}{Proposition}
\newtheorem{lem}{Lemma}
\newtheorem{exmp}{Example}
\begin{document}


\title{
Collective steering in finite time:\\
controllability on $\GL$
}

\author{Mahmoud Abdelgalil\orcidlink{https://orcid.org/0000-0003-1932-5115}  and Tryphon T. Georgiou\orcidlink{https://orcid.org/0000-0003-0012-5447}
\thanks{Mahmoud Abdelgalil is with Electrical and Computer Engineering, University of California, San Diego, La Jolla, CA, USA, mabdelgalil@ucsd.edu.}
\thanks{Tryphon T. Georgiou is with Mechanical and Aerospace Engineering, University of California, Irvine, Irvine, CA, USA, tryphon@uci.edu.}}
\maketitle

\begin{abstract}
We consider the problem of steering a collection of {\it n} particles that obey identical {\it n\hspace*{1pt}}-\hspace*{1pt}dimensional linear dynamics via a common state feedback law towards a rearrangement of their positions, cast as a controllability problem for a dynamical system evolving on the space of matrices with positive determinant. 
We show that such a task is always feasible and, moreover, that it can be achieved arbitrarily fast. We also show that an optimal feedback control policy to achieve a similar feat, may not exist. Furthermore, we show that there is no universal formula for a linear feedback control law to achieve a rearrangement, optimal or not, that is everywhere continuous with respect to the specifications. We conclude with partial results on the broader question of controllability of dynamics on orientation-preserving diffeomorphisms.
\end{abstract}
%
\begin{keywords}{}
Linear Feedback Control, Controllability, Bilinear Systems, Optimal Control, Geometric Control.
\end{keywords}
%
\section{Introduction}
An often unappreciated fact is that optimal control policies, to steer linear dynamics between specified values of the state vector, may not always be expressible in \emph{linear} feedback form.  
The most elementary such example is the problem to steer first-order dynamics, $\dot x_t=u_t$ with $u_\cdot ,\,x_\cdot$ taking values in $\mathbb{R}$, from an initial state $x_{\rm in}=-1$ to a final $x_{\rm fn}=1$ over the time window $[0,1]$, seeking to minimize the quadratic cost functional $\int_0^1 u_t^2\,dt$. It can be readily seen that the optimal control is $u^\star_t=2$ and that the optimal state trajectory is $x^\star_t=2t-1$. If we now wanted to express the control input in linear state-feedback form, $u^\star_t=k_t x_t$, we would be called to use the feedback gain $k_t=(t-1/2)^{-1}$ for $t\in[0,2]$, which is not integrable, as the path needs to traverse past the point of equilibrium at the origin.

The source of the problem may at first seem to be the topological obstruction of the state trajectory having to cross the origin, since we demand traversing between terminal states that are on opposites sides and the state-space is $\mathbb{R}$. However the problem is much deeper and persists in higher dimensions due to a global topological obstruction, as we will explain later on, due to the fact that $\GL$ is not simply connected\footnotemark[1].
One consequence is that the optimal control policy, to traverse between specified states in a specified amount of time, cannot be expressed in feedback form in general.
Another consequence is that a linear feedback control policy to effect a specified (non-trivial) rearrangement of $n$ initial conditions in $\mathbb R^n$ for linear dynamics, via a time-varying feedback gain that depends continuously on the rearrangement linear map, may not exist either.
The existence of control laws that avoid such a phenomenon have been sought in earlier studies that aimed at the control of the Liouville equation, and are also of importance in the present work as well.

Our interest in feedback regulation to specify the state of a dynamical system stems from applications where one seeks to steer a collection of agents towards a desired  configuration. The agents individually are assumed to obey identical equations of motion that are linear, namely,
\begin{align}\label{eq:linear-sys0}
    \dot{x}{_t}&= A x{_t} + B u{_t}, 
\end{align}
with $x{_t}\in\mathbb{R}^n$, $A\in\mathbb{R}^{n\times n}$, $B\in\mathbb{R}^{n\times m}$ full-column rank, and $u{_\cdot}$ being the control input.
It is desirable that a time-varying matrix $K_\cdot\in\mathbb{R}^{m\times n}$ of common feedback gains which can be broadcast to all, to implement a state-feedback control, is sufficient to ensure that the swarm successively repositions itself to the desired terminal arrangement. We term this endeavour  {\em collective steering}.

Collective steering can be seen as a complementing theme to that of {\em ensemble control} \cite{li2010ensemble}.
In ensemble control, the main paradigm is to steer a collection of agents, each obeying their own dynamics, utilizing a universal input that can similarly be broadcast to all. One may visualize the agents as parametrized by individual ``labels'' that specify their identity and dynamics. This framework is tantamount to a Lagrangian viewpoint. In contrast, collective steering concerns the steering of a collection of agents obeying identical dynamics with each implementing their own input. It is the feedback law specifying these individual inputs that is being broadcast. Thereby, the framework of collective steering can be seen as representing an Eulerian viewpoint in regulating the motion of the collective.\footnotetext[1]{Throughout, $\GL$ denotes the identity component of the general linear group, the multiplicative group of square invertible matrices.}

Another salient feature in ensemble control is that the underlying global system, that includes all agents, is often severely under-actuated since the same input is implemented by all. On the other hand, in collective steering each agent has complete authority to specify their input using local information, i.e., utilizing knowledge of their own state. The roots of this formalism can be traced to the influential works by Brockett \cite{brockett2007optimal}, Agrachev and Caponigro \cite{agrachev2009controllability}, and, more recently, Agrachev and Sarychev \cite{agrachev2022control}, where they study controllability on the group of diffeomorphisms. In our recent work \cite{abdelgalil2024sub}, we explore an analogous formalism in connection to the \emph{holonomy} of optimal mass transport --loosely speaking, holonomy refers to the model that allows keeping track and regulating internal degrees of freedom of the mass distribution of particles that are viewed as distinguishable.

As noted, in the present work we limit our investigation to collective steering with linear feedback.
Thus, the problem we consider amounts to designing a common feedback gain matrix $K_\cdot$ and a reference signal $v_t$, so that the control law
\begin{equation}\label{eq:u}
    u_t=K_t x_t +v_t
\end{equation}
steers the individual states $x_t^i$, for $i\in\{1,\ldots,N\}$, of a swarm of $N$ agents 
from an initial configuration
\[
X_{\rm in}=\begin{bmatrix}
    x_{\rm in}^1, &\ldots& x_{\rm in}^N
\end{bmatrix} \mbox{ at } t=0,
\]
to a desired final one,
\[
X_{\rm fn}=\begin{bmatrix}
    x_{\rm fn}^1, &\ldots& x_{\rm fn}^N
\end{bmatrix} \mbox{ at }t=t_{\rm fn}.
\]
By substituting \eqref{eq:u} into \eqref{eq:linear-sys0},
\begin{align}\label{eq:linear-sys2}
    \dot{x}{_t}&= (A+BK_t) x{_t}+ Bv_t, 
\end{align}
we see that the common reference signal $v_t$ may be used to independently and freely adjust the mean value, i.e., the center of mass of the swarm. On the other hand, regarding the relative positioning of the agents, the most we can achieve with linear feedback is to effect a transformation 
\[
X_{\rm in}\mapsto \Phi{_{t_{\rm fn}}} X_{\rm in},
\]
with $\Phi_t$ being the state transition matrix of \eqref{eq:linear-sys2} at time $t$. Thus, without loss of generality, we specialize to $v=0$ and focus our attention on the case where $N=n$. 
The problem then reduces to steering the bi-linear control system
\begin{align}\label{eq:right-invariant-system0}
    \dot{X}_t&= (A+B K_t) X_t,
\end{align}
 with $X_t\in\mathrm{R}^{n\times n}$, from $X_0=X{_{\rm in}}$ to $X{_{t_{\rm fn}}}=X{_{\rm fn}}$, where now the feedback gain matrix $K_{\cdot}:[0,t_{\rm fn}]\rightarrow\mathbb{R}^{m\times n}$ represents our control authority. 
 
 Evidently, if $\det(X{_\fn})\det(X{_\init })\leq 0$, then necessarily $\det(X_t)$ must vanish for some $t\in[0,t_{\rm fn}]$. 
At such time, the swarm finds itself in a low dimensional subspace, very much as when we steer a single dynamical system through the origin in the opening example. For the same reasons, this situation cannot happen if the agents are steered collectively, with a choice of (bounded) $K_\cdot$. Therefore, we further restrict our attention to terminal specifications that satisfy 
\[
\det(X{_\fn})\det(X{_\init })> 0.
\]
Steering \eqref{eq:right-invariant-system0} between such terminal specifications is equivalent to steering
\begin{align}\label{eq:right-invariant-system}
    \dot{\Phi}_t&= (A+B K_t) \Phi_t,
\end{align}
from $\Phi_0=I$ to $\Phi{_{t_\fn}} = X{_\fn} X{_\init ^{-1}}$, with $\Phi_t$ being the state transition matrix for the specified dynamics.

Thus, henceforth, we investigate the controllability properties of \eqref{eq:right-invariant-system}, which is a \emph{right-invariant control system} evolving on the identity component of the (real) general linear group--i.e., matrices with positive determinant. We are interested especially in the notion of {\em strong controllability} as adapted\footnote{It is worth mentioning that strong controllability, as adopted herein, is referred to as \emph{exact-time controllability} in \cite[p. 89]{jurdjevic1997geometric} and that the notion of strong controllability adopted in \cite[Definition 9-(a), p. 88]{jurdjevic1997geometric} is slightly different. The distinction, however, is not crucial for our purposes and is omitted.} from \cite[p. 89]{jurdjevic1997geometric}, namely,
    the bi-linear system \eqref{eq:right-invariant-system} is said to be {\em strongly controllable} if, for any $t_\fn>0$ and $\Phi_\fn\in\GL$, there exists an integrable control input $K_\cdot:[0,t_\fn]\rightarrow\mathbb{R}^{m\times n}$ that steers \eqref{eq:right-invariant-system} from $\Phi_0=I$ to $\Phi_{t_\fn}=\Phi_\fn$.
This notion is to be contrasted with the classical notion of controllability wherein the terminal time $t_\fn$ may not be arbitrarily chosen, see, e.g., \cite[Definition 9-(b), p. 88]{jurdjevic1997geometric}.

While the literature is abound with necessary and sufficient conditions for controllability of control systems on Lie groups, going back to the pioneering works \cite{jurdjevic1972control,brockett1972system},  results on strong controllability are relatively sparse. The reason simply is that establishing strong controllability necessitates the complete characterization of the reachable set from the identity at each time instant $t>0$, also known as the exact-time reachable set. For \emph{driftless systems}, e.g., the case $A=0$ for \eqref{eq:right-invariant-system}, this is straightforward provided that the control input is allowed to take arbitrarily large values \cite[Theorem 5.1]{jurdjevic1972control}. For systems with drift, the situation is more involved and there are counterexamples that advise caution \cite[Example 11, p. 88]{jurdjevic1997geometric}. Although, under certain technical assumptions, it is possible to give an explicit characterization of the exact-time reachable set for systems with drift as in \cite{hirschorn1973topological}, such assumptions are far from general. Indeed, the bi-linear system \eqref{eq:right-invariant-system} does not satisfy the assumptions in \cite{hirschorn1973topological}. Further complications arise, even in the study of the weaker notion of controllability, due to the fact that $\GL$ is neither compact nor semi-simple as a Lie group. Nevertheless, as we prove below, it turns out that the Kalman rank condition is in fact a necessary and sufficient condition for strong controllability of \eqref{eq:right-invariant-system}.

In the body of the paper,
 we first establish the controllability of \eqref{eq:right-invariant-system} in Section \ref{sec:II}. In the process, we explain that classical optimal control falls short of providing a {\em direct} path to the controllability of \eqref{eq:right-invariant-system}, contrasting with the open-loop optimal control of an associated linear system. We return to this theme in Section \ref{sec:topology} where we discuss topological obstructions in constructing a universal formula for control laws that are continuous in the problem data.
In Section \ref{eq:strong} we refine the construction in Section \ref{sec:II} to establish strong controllability of \eqref{eq:right-invariant-system}. An illustrative example is presented in Section \ref{sec:example}, and the paper concludes with results on the controllability of dynamics that evolve on the space of orientation-preserving diffeomorphisms on $\mathbb R^n$.

\section{Collective Controllability}\label{sec:II}
A natural first step in addressing controllability properties of the matrix-valued dynamics in \eqref{eq:right-invariant-system}, that are bi-linear in nature,
is to view $U_t=K_t\Phi_t$ as a matrix-valued control input, and consider the linear dynamics $\dot \Phi_t=A\Phi_t + BU_t$ instead.
We do exactly that, but also introduce an additional constant parameter $K_c$, and write
\[
U_t = (K_t-K_c)\Phi_t.
\]
In this way, we examine the linear dynamical system
\begin{align}\label{eq:linear-sys}
    \dot{\Phi}{_t}&= A{_c} \Phi{_t} + B U{_t}, \mbox{ with } A{_c}=A+B K{_c},
\end{align}
having first chosen $K{_c}\in\mathbb{R}^{m\times n}$ suitably to ensure desirable properties for the spectrum of $A_c$ for the purposes of analysis. 

As can be readily seen, \eqref{eq:linear-sys} is obtained from \eqref{eq:right-invariant-system} by selecting the feedback gain
\begin{align}\label{eq:feedback-transformation}
    K{_t}&= U{_t} \Phi{_t^{-1}} + K_c.
\end{align}
Evidently, the transformation \eqref{eq:feedback-transformation} remains a bijection between $K_t$ and $U_t$ as long as $\Phi{_t}$ is invertible, i.e., as long as $\Phi{_t}\in\GL$. In this case, when the input $U\indices{_t}$ can be written in state-feedback form, the two systems \eqref{eq:right-invariant-system} and \eqref{eq:linear-sys}  follow identical trajectories when initialized at the same point.

Trajectories that coalesce under a control policy $U_\cdot$ or, are simply brought into a lower dimensional subspace at any time $t$, 
equivalent to $\Phi_t$ becoming singular, do not correspond to trajectories of \eqref{eq:right-invariant-system}. Hence, herein, we investigate control laws of \eqref{eq:linear-sys} that can be written in feedback form.

\subsection{Optimal control in feedback form?}

It is tempting to seek control laws for \eqref{eq:right-invariant-system} by adapting optimal control laws derived for \eqref{eq:linear-sys} for a quadratic performance criterion, since the latter can be expressed essentially in closed form. Unfortunately, this cannot be done. An optimal control policy may not be expressible in feedback form, in general.

The particular quadratic optimization criterion used to devise control laws is not essential, as similar issues arise for any alternative choice. Deeper reasons for that will become apparent later on, in Section \ref{sec:topology}, when we discuss topological obstructions to the existence of feedback laws that depent continuously on the problem data. At present and for simplicity, we restrict ourselves to control laws that minimize the control energy functional
\begin{align}\label{eq:intU2}
    U{_\cdot}\mapsto \int_0^{t_\fn}\lVert U{_t}\lVert_{\mathrm{F}}^2 \,\mathrm{d}t,
\end{align}
where $\lVert \cdot\lVert_{\mathrm{F}}$ denotes the Frobenius norm. We conclude this subsection with an example that highlights this point, that the optimal law may not be expressible in feedback form.

It is well known that as long as the pair $(A,B)$ satisfies the Kalman rank condition
\begin{align*}
    \mathrm{rank}(\left[B,~AB,~\cdots,~A^{n-1}B\right]) = n,
\end{align*}
which is equivalent to $(A_c,B)$ satisfying the rank condition,
the linear system \eqref{eq:linear-sys} is strongly controllable, in that it can be steered between any initial and final points $\Phi{_\init },\Phi{_\fn}\in\mathbb{R}^{n\times n}$ through an appropriate choice of the input $U{_\cdot}$ over an arbitrary interval $[0,t_\fn]$ with $t_\fn > 0$. The Kalman condition will be assumed throughout.

It is also well known that the control input $U{_\cdot^\star}$ that steers \eqref{eq:linear-sys} from $\Phi\indices{_\init }=I$ to $\Phi{_\fn}\in\GL$, over the interval $[0,t_\fn]$, and minimizes the cost functional \eqref{eq:intU2} is unique and given by the formula
\begin{align}\label{eq:minimum_energy_input}
    U{_t^\star}&= B{^\top}\mathrm{e}^{-A{_c^\top}t }W{_{t_\fn}^{-1}}\left(\mathrm{e}^{-A{_c} t_\fn } \Phi{_\fn} - I\right).
\end{align}
From the variation-of-constants formula, the corresponding optimal trajectory $\Phi{_\cdot^\star}$ is
\begin{align}\label{eq:minimum_energy_state}
    \Phi{_t^\star}= \mathrm{e}^{A{_c} t}\left(I +  W{_t} W{_{t_\fn}^{-1}}\left(\mathrm{e}^{-A{_c} t_\fn } \Phi{_\fn} - I\right)\right),
\end{align}
where the matrix-valued function (Grammian) $W_{\cdot}$ is
\begin{align}\label{eq:W}
    W_{\cdot}:t\mapsto \int_0^t\mathrm{e}^{-A{_c}\tau} B B{^\top}\mathrm{e}^{-A{_c^\top}\tau}\,\mathrm{d}\tau.
\end{align}

If $\det(\Phi{_t^\star})>0$ for all $t\in[0,t_\fn]$, i.e., $\Phi{_t^\star}\in\GL$, then the input
\begin{align}\label{eq:minimum-energy-feedback-gain}
K\indices{_\cdot^\star}:t\mapsto U{_t^\star} \left(\Phi{_t^\star}\right){^{-1}} + K_c,
\end{align}
steers the bi-linear system \eqref{eq:right-invariant-system} from $\Phi\indices{_0}=I$ to $\Phi{_\fn}\in\GL$, over the interval $[0,t_\fn]$. Such a conclusion, however, depends critically on $\Phi{_t^\star}$ remaining in $\GL$ for all $t\in[0,t_\fn]$. 
An elementary counterexample, along similar lines as the opening example of the introduction, where this fails to be the case is the following. In this, both $\Phi_\init,\Phi_\fn$ belong to the identity component of the general linear group, in that both have positive determinants.

\begin{exmp}\label{exmp:counter-example}
    Let $n\geq 2$ be even, and
    \begin{align*}
        A&=0,& B&=I, & K{_c}&=0, & \Phi{_\init }&=I, & \Phi{_\fn}&=-I.
    \end{align*}
    Because $n$ is even, we have that $\det(\Phi{_\init})=\det(\Phi{_\fn})=1$. Direct computation shows that $\Phi{_\cdot^\star}$ is given by
    \begin{align*}
        \Phi{_t^\star}&= \frac{t_\fn-2t}{t_\fn} I,
    \end{align*}
    which becomes singular at $t=\frac{1}{2}t_\fn$. The case of odd $n$ is not substantially different.
\end{exmp}\vspace*{0.1cm}

In light of this example, it is of great interest to obtain sufficient conditions on $\Phi{_\fn}$ for the optimal control policy $U{_t^\star}$ to be expressible in feedback form.

\subsection{Brockett's approach}\label{subsec:Brockett}

 In his seminal paper on the control of the Liouville equation \cite{brockett2007optimal}, Brockett suggested that the condition\footnote{Throughout,
 $\|\cdot\|$ denotes the induced $2$-norm (largest singular value).}
\begin{align}\label{eq:brockett-sufficient-condition}
    \| \Phi_\fn - I\| <1,
\end{align}
ensures that $\Phi_{t}$ as defined in
\eqref{eq:minimum_energy_state} remains invertible for all $t\in[0,t_\fn]$ and $t_\fn>0$. 
This is not true in general.
If it were true, it would guarantee that the feedback gain $K_\cdot$ in \eqref{eq:minimum-energy-feedback-gain} is well-defined, and would provide a convenient formula for designing piece-wise smooth trajectories in $\GL$ for the system \eqref{eq:right-invariant-system}. In this way, Brockett sought to establish controllability of \eqref{eq:right-invariant-system} using short optimal segments as motion primitives.
Below, we modify \eqref{eq:brockett-sufficient-condition} so as to complete Brockett's original program in Section \ref{subsec:next}.

The crux of Brockett's argument was to observe that, for any given $t_\fn>0$, there exists some constant gain $K{_c}\in\mathbb{R}^{m\times n}$ such that 
\begin{align}\label{eq:pole-placement-periodic}
    \mathrm{e}^{A_c t_\fn} &= I,
\end{align}
where, as before, $A_c= A+B K_c$. The existence of such a $K_c$ is guaranteed by the assumption that the pair $(A,B)$ satisfies the Kalman rank condition. With such a choice of $K{_c}$, the expression for the optimal trajectory \eqref{eq:minimum_energy_state} becomes
\begin{align}\label{eq:minimum_energy_state_prime}
    \Phi{_t^\star}= \mathrm{e}^{A{_c} t}\left(I +  W{_t} W{_{t_\fn}^{-1}}\left(\Phi_\fn - I\right)\right),\tag{\ref{eq:minimum_energy_state}$^\prime$}
\end{align}
Brockett then asserted \cite[Proof of Lemma 1]{brockett2007optimal} that the monotonicity
of the Grammians,
\begin{align}\label{eq:grammian-is-monotonic}
    0\prec W{_t} \prec W{_{t_\fn}}, \mbox{ for }t\in(0,t_\fn),
\end{align}
 implies that $\lVert W_t W_{t_\fn}^{-1}\lVert_{\vphantom{\mathrm{F}}} \leq 1$, and thereby, that \eqref{eq:brockett-sufficient-condition} suffices for
 \[
 \|W_tW_{t_\fn}^{-1}\left(\Phi_\fn - I\right)\|<1,
 \]
which in turn implies that $\det(\Phi{_t^\star})>0$ for all $t\in[0,t_\fn]$ and every $t_\fn>0$. 
However, in general, $W{_t} W{_{t_\fn}^{-1}}$ is not symmetric, and therefore the assertion that $\lVert W_t W_{t_\fn}^{-1}\lVert_{\vphantom{\mathrm{F}}} \leq 1$ is not true in general. The monotonicity in \eqref{eq:grammian-is-monotonic} only implies
that the spectral radius of $W{_t} W{_{t_\fn}^{-1}}$ is bounded above by $1$, and not the norm.

In the following proposition we strengthen Brockett's  condition so as to ensure that $\Phi{_t^\star}$ remains in $\GL$, and complete in the next section Brockett's program.

\vspace*{0.1cm}\begin{prop}
\label{prop:brockett-condition-corrected}
    Assume that the pair $(A,B)$ satisfies the Kalman rank condition, and that for a given $t_\fn>0$ a matrix $K_c$ has been selected so that \eqref{eq:pole-placement-periodic} holds.
    If $\Phi{_\fn}\in\GL$ is specified so that
 \begin{align}\label{eq:brockett-sufficient-condition-correct}
        \lVert W{_{t_\fn}^{-\frac{1}{2}}}\Phi{_\fn} W{_{t_\fn}^{\frac{1}{2}}}- I\lVert_{\vphantom{\mathrm{F}}} < 1,
    \end{align}
     then $\Phi_t^\star$ belongs to $\GL$ for all $t\in[0,t_\fn]$. 
    
\end{prop}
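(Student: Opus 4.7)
The plan is to transfer the problem to a \emph{symmetrized} coordinate system in which the hypothesis \eqref{eq:brockett-sufficient-condition-correct} can be combined cleanly with the monotonicity of the Grammian. Concretely, I would factor the optimal trajectory as $\Phi_t^\star = \mathrm{e}^{A_c t} M_t$, where $M_t := I + W_t W_{t_\fn}^{-1}(\Phi_\fn - I)$. Since $\det(\mathrm{e}^{A_c t}) = \mathrm{e}^{t\,\trace(A_c)} > 0$, the task reduces to showing $\det(M_t) > 0$ for every $t\in[0,t_\fn]$.

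Next, I would conjugate $M_t$ by $W_{t_\fn}^{1/2}$ (which is well defined and positive definite by the Kalman condition, guaranteeing $W_{t_\fn}\succ 0$). A direct computation, using $W_{t_\fn}^{-1/2}W_{t_\fn}^{1/2} = I$ inserted in the middle, yields
\begin{equation*}
  W_{t_\fn}^{-\frac{1}{2}} M_t\, W_{t_\fn}^{\frac{1}{2}} \;=\; I + \widetilde{W}_t\bigl(\widetilde{\Phi}_\fn - I\bigr),
\end{equation*}
where $\widetilde{W}_t := W_{t_\fn}^{-1/2}W_t W_{t_\fn}^{-1/2}$ and $\widetilde{\Phi}_\fn := W_{t_\fn}^{-1/2}\Phi_\fn W_{t_\fn}^{1/2}$. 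The key observation (and the step that circumvents the gap in Brockett's original argument) is that $\widetilde{W}_t$ is now \emph{symmetric}: the monotonicity $0 \preceq W_t \preceq W_{t_\fn}$ transfers to $0 \preceq \widetilde{W}_t \preceq I$, hence $\|\widetilde{W}_t\| \leq 1$ as an operator norm, not merely a spectral radius.

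Combining this with the hypothesis $\|\widetilde{\Phi}_\fn - I\| < 1$ via submultiplicativity gives $\|\widetilde{W}_t(\widetilde{\Phi}_\fn - I)\| < 1$, so a Neumann-series argument shows that $I + \widetilde{W}_t(\widetilde{\Phi}_\fn - I)$ is invertible for every $t\in[0,t_\fn]$. Conjugation preserves determinants, so $M_t$ is invertible as well. Finally, to upgrade invertibility to \emph{positive} determinant, I would note that $\det(M_t)$ is a continuous function of $t$ equal to $1$ at $t=0$ (since $W_0 = 0$) and never vanishing on $[0,t_\fn]$ by the argument above; by the intermediate value theorem it remains strictly positive throughout, which completes the proof that $\Phi_t^\star\in\GL$.

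I do not expect a significant obstacle here: the only subtle point is recognizing the correct similarity transformation, which the statement \eqref{eq:brockett-sufficient-condition-correct} telegraphs. Everything else is routine linear algebra once the symmetrization is in place.
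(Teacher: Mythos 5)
Your proposal is correct and follows essentially the same route as the paper's proof: conjugating the factor $I + W_t W_{t_\fn}^{-1}(\Phi_\fn - I)$ by $W_{t_\fn}^{1/2}$ so that the Grammian ratio becomes the symmetric matrix $W_{t_\fn}^{-1/2}W_t W_{t_\fn}^{-1/2}$ with operator norm at most one, then invoking the hypothesis and a Neumann-series argument for invertibility. Your closing continuity/intermediate-value step, making explicit that the nonvanishing determinant starting from $\det(M_0)=1$ stays positive, is a detail the paper leaves implicit but changes nothing in substance.
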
\vspace*{0.1cm}
\begin{proof}
 The monotonicity property in \eqref{eq:grammian-is-monotonic} implies that
\begin{align*}
    0 \prec 
    W_{t_\fn}^{-\frac{1}{2}}
    W_t
    W_{t_\fn}^{-\frac{1}{2}}
    \prec I,
\end{align*}
and since
    $W_{t_\fn}^{-\frac{1}{2}}
    W_t
    W_{t_\fn}^{-\frac{1}{2}}$
    is symmetric, 
\begin{equation}\label{eq:Winequality}
    \lVert W_{t_\fn}^{-\frac12}
    W_t
    W_{t_\fn}^{-\frac12}
    \lVert < 1.
\end{equation}
From \eqref{eq:minimum_energy_state_prime},
\begin{align*}
\Phi_t^\star&=
    \mathrm{e}^{A{_c} t}
        \left(I +  W_t W_{t_\fn}^{-1}\left(\Phi_\fn -I\right)\right)\\
&=   \mathrm{e}^{A{_c} t}
        \left(I +  W_t W_{t_\fn}^{-\frac12}\left(W_{t_\fn}^{-\frac12}\Phi_\fn W_{t_\fn}^{\frac12} -I\right)W_{t_\fn}^{-\frac12}\right)\\
        &=   \mathrm{e}^{A{_c} t}W_{t_\fn}^{\frac12}
        \left(I + \phantom{W_{t_\fn}^{-\frac12}}\right.\\
        &\phantom{=xxx} \left. + W_{t_\fn}^{-\frac12} W_t W_{t_\fn}^{-\frac12}\left(W_{t_\fn}^{-\frac12}\Phi_\fn W_{t_\fn}^{\frac12} -I\right)\right)W_{t_\fn}^{-\frac12}.
\end{align*}
  From \eqref{eq:brockett-sufficient-condition-correct} and \eqref{eq:Winequality} it follows that
  \begin{equation}\label{eq:invertibility}
  \left(I + W_{t_\fn}^{-\frac12} W_t W_{t_\fn}^{-\frac12}\left(W_{t_\fn}^{-\frac12}\Phi_\fn W_{t_\fn}^{\frac12} -I\right)\right)
  \end{equation}
  is invertible and so is $\Phi_t^\star$.
\end{proof}

\begin{remark}
    The value of condition \eqref{eq:brockett-sufficient-condition-correct} is that it ensures that the optimal control law can be written in feedback form  \eqref{eq:minimum-energy-feedback-gain}. Thereby, it characterizes a class of matrices in $\GL$ for which this is possible and which can be reached from the identity via this particular choice of time-varying matrix of feedback gains.
    In the next section, matrices that belong to this class will be used as intermediate points to construct a path from the identity to any specified final value $\Phi_\fn$ in a similar manner. $\Box$
\end{remark}

\subsection{Controllability on $\GL$}\label{subsec:next}

We are now in position to complete Brockett's program and establish controllability of \eqref{eq:right-invariant-system}, i.e., that given an arbitrary $\Phi_\fn\in\GL$, there exists a suitable $K_\cdot$ that represents the control parameter and steers \eqref{eq:right-invariant-system} from the identity to $\Phi_\fn$.
This is stated next. The time $t_\fn$ required for the transition is not specified in advance. Strong controllability, where $t_\fn$ can in fact be specified in advance, will be established in Section \ref{eq:strong} via a refinement of the approach.

\begin{thm}\label{prop:brockett-controllability-corrected}
    Assume that the pair $(A,B)$ satisfies the Kalman rank condition. Then the bi-linear system \eqref{eq:right-invariant-system} is controllable on $\GL$.
\end{thm}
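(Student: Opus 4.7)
The plan is to leverage Proposition \ref{prop:brockett-condition-corrected} to obtain a local reachability statement at the identity, then to exploit path-connectedness of $\GL$ together with the right-invariance of \eqref{eq:right-invariant-system} to concatenate finitely many short optimal segments into a control steering $I$ to any prescribed $\Phi_\fn$.

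First, I would fix an arbitrary $\tau>0$ and, using the Kalman rank condition, select a constant gain $K_c$ so that \eqref{eq:pole-placement-periodic} holds on the interval of length $\tau$; placing the eigenvalues of $A_c=A+BK_c$ at suitable integer multiples of $2\pi i/\tau$ (with $0$ included if $n$ is odd) makes this possible. With $K_c$ fixed, the Grammian $W_\tau$ is a fixed positive-definite matrix, and Proposition \ref{prop:brockett-condition-corrected} asserts that every matrix in the open neighborhood of the identity
\[
\mathcal{U} := \bigl\{\Phi\in\GL : \|W_\tau^{-\tfrac12}\,\Phi\, W_\tau^{\tfrac12}-I\|<1\bigr\}
\]
is reachable from $I$ in time $\tau$ via the feedback \eqref{eq:minimum-energy-feedback-gain}. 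Because \eqref{eq:right-invariant-system} is right-invariant, the same feedback, composed on the right with $\Phi_0$, steers any $\Phi_0\in\GL$ to any $\Phi_1\in\GL$ with $\Phi_1\Phi_0^{-1}\in\mathcal{U}$ in time $\tau$.

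Next, since $\GL$ is the identity component of the general linear group, it is path-connected; choose a continuous path $\gamma:[0,1]\to\GL$ with $\gamma(0)=I$ and $\gamma(1)=\Phi_\fn$. For each $s\in[0,1]$ the set $\{\Phi\in\GL : \Phi\,\gamma(s)^{-1}\in\mathcal{U}\}$ is an open neighborhood of $\gamma(s)$, so by compactness of $\gamma([0,1])$ and continuity of $\gamma$ (a standard Lebesgue-number argument) there exists a finite partition $0=s_0<s_1<\cdots<s_N=1$ with $\gamma(s_{i+1})\gamma(s_i)^{-1}\in\mathcal{U}$ for every $i\in\{0,\ldots,N-1\}$. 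Applying the local reachability statement on each of the $N$ intervals of length $\tau$ and concatenating the resulting feedback gains produces an integrable $K_\cdot$ on $[0,N\tau]$ steering \eqref{eq:right-invariant-system} from $I$ to $\Phi_\fn$.

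I expect the only subtlety to be bookkeeping rather than any genuine obstruction: one must verify that the concatenated $K_\cdot$ is measurable and integrable, which follows since each piece is continuous on its closed subinterval and $\Phi_t^\star$ remains in $\GL$ throughout by Proposition \ref{prop:brockett-condition-corrected}. The total transition time $N\tau$ produced by this argument depends on $\Phi_\fn$ through the length of the chosen path and the size of $\mathcal{U}$, so the time cannot yet be prescribed in advance; removing that dependence is precisely the content of the strong controllability result to be refined in Section \ref{eq:strong}.
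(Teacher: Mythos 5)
Your argument is correct: Proposition \ref{prop:brockett-condition-corrected} does give an open neighborhood $\mathcal U$ of the identity reachable in time $\tau$, right-invariance of \eqref{eq:right-invariant-system} lets you translate that primitive along the group, and the compactness/Lebesgue-number step along a path in the connected group $\GL$ legitimately produces finitely many intermediate points $\gamma(s_i)$ with $\gamma(s_{i+1})\gamma(s_i)^{-1}\in\mathcal U$ (the map $(s,s')\mapsto \gamma(s')\gamma(s)^{-1}$ is continuous, equals $I$ on the diagonal, and the diagonal is compact). This is the same overall program as the paper's proof — short segments furnished by Proposition \ref{prop:brockett-condition-corrected}, concatenated by right multiplication — but you obtain the factorization of $\Phi_\fn$ differently. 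The paper constructs it explicitly: Lemma \ref{lem:phi-decomposition} writes $\Phi_\fn$ as a product of near-identity factors using $\Phi_\fn=\mathrm e^{M_2}\mathrm e^{M_1}$ and $N$-th roots of the exponentials, and Lemma \ref{lem:lambdas} converts the unweighted bound $\|\Phi_k-I\|<\sqrt{\lambda_{\min}(W_{t_s})/\lambda_{\max}(W_{t_s})}$ into the weighted condition \eqref{eq:brockett-sufficient-condition-correct}. Your route bypasses both appendix lemmas by targeting the weighted condition directly through the open set $\mathcal U$; it is shorter and purely topological, at the price of being non-constructive — you get no explicit handle on $N$, the intermediate points, or the resulting feedback gains, whereas the paper's version yields computable factors (useful for the numerical example and as a stepping stone to the strong-controllability refinement, where the number of factors must be fixed independently of $\Phi_\fn$). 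Your closing remark is also accurate: in both proofs the total time depends on $\Phi_\fn$, and removing that dependence is exactly what Section \ref{eq:strong} accomplishes via the Ballantine factorization.
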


\begin{proof}
Let $\Phi_\fn$ be any matrix in $\GL$. We will show that there exists a time $t_\fn>0$ and a solution $\Phi_\cdot$ of \eqref{eq:right-invariant-system} for a suitable choice of control $K_\cdot$ so that $\Phi_{t_\fn}=\Phi_\fn$.

We arbitrarily select a time step $t_s>0$, and then a feedback $K_c$ such that 
\begin{align}\label{eq:pole-placement-periodictag}\tag{\ref{eq:pole-placement-periodic}$^\prime$}
    \mathrm{e}^{A_c t_s} &= I.
\end{align}
Next we compute $W_{t_s}$, and determine an integer $N$ together with a factorization
\[
\Phi_\fn = \oprod_{k=1:N} \Phi_k := \Phi_N\cdots \Phi_1,
\]
such that for all $k\in\{1,\ldots N\}$,
\[
\|\Phi_k - I \|< \sqrt{\frac{\lambda_{\rm min}(W_{t_s})}{\lambda_{\rm max}(W_{t_s})}}.
\]
That such a factorization is always possible for sufficiently large $N$ is a consequence of Lemma \ref{lem:phi-decomposition} that is provided in the Appendix. 
From Lemma \ref{lem:lambdas} that is also provided in the Appendix, setting 
\[
\Delta=\sqrt{\frac{\lambda_{\rm max}(W_{t_s})}{\lambda_{\rm min}(W_{t_s})}}(\Phi_k-I),
\]
we deduce that
\[
\|W_{t_s}^{-\frac12}\Phi_k W_{t_s}^{\frac12}-I\|<1,
\]
for all $k$. The statement in Proposition \ref{prop:brockett-condition-corrected}, applied to construct a path $\Phi^{k,\star}_t$, for $t\in[(k-1)t_s,kt_s]$, connecting $I$ to $\Phi_k$, allows us to conclude that there exists a trajectory 
\[
\Phi_t =
\begin{cases}
\Phi^{1,\star}_t, & \mbox{for }0\leq t \leq t_s,\\
\Phi^{2,\star}_t\Phi_1, & \mbox{for }t_s\leq t \leq 2t_s,\\
\hspace*{.5cm}\vdots\\
\Phi^{N,\star}_t \oprod_{k=2:N} \Phi_{k-1}, & \mbox{for }(N-1)t_s\leq t \leq Nt_s,
\end{cases}
\]
of the bi-linear system \eqref{eq:right-invariant-system} that connects the identity to $\Phi_\fn$ over the window $[0,N t_s]$, for terminal time $t_\fn=N t_s$.
\end{proof}

\begin{remark}
    The above proof follows the basic line sketched by Brockett \cite{brockett2007optimal}. However, Brockett assumed that the condition \eqref{eq:brockett-sufficient-condition} was sufficient to ensure that a feedback gain can be obtained to steer \eqref{eq:right-invariant-system} through a sequence of intermediate points 
    that terminate at the desired terminal condition. 
    Moreover, since the condition Brockett proposed was independent of $W_{t_s}$,
this led him to conclude that \eqref{eq:right-invariant-system} is strongly controllable. As we noted in Section \ref{subsec:Brockett}, since \eqref{eq:brockett-sufficient-condition} is not sufficient for control laws to be implementable in feedback form, neither conclusion follows. $\Box$
\end{remark}

\section{Strong Collective Controllability}\label{eq:strong}

As explained earlier, the ability to steer a collective between arbitrary configurations amounts to the controllability of the bilinear system \eqref{eq:right-invariant-system}. Accomplishing such a task arbitrarily fast on the other hand, amounts to strong controllability of \eqref{eq:right-invariant-system}. As we will show in the present section, interestingly, this stronger notion of controllability is also valid when the system matrices satisfy the Kalman rank condition.

In preparation to proving our claim on the strong controllability of \eqref{eq:right-invariant-system}, we provide first a refinement of Proposition \ref{prop:brockett-condition-corrected} that now allows us to identify a class of matrices in $\GL$ that we can reach  from the identity via suitable feedback gains  {\em arbitrarily fast}, that is, within any specified time $t_s>0$. These matrices will subsequently be used as motion primitives in a similar manner as before.

\begin{prop}\label{prop:georgiou-sufficient-condition}
    Assume that the pair $(A,B)$ satisfies the Kalman rank condition, $t_s>0$, and $K_c$ computed so that \eqref{eq:pole-placement-periodictag} holds. Then, for every $\Phi_\fn\in\GL$ that satisfies
\begin{subequations}\label{eq:georgiou-sufficient-condition}
        \begin{align}\label{eq:g1}
            W{_{t_s}^{-\frac{1}{2}}}\Phi{_\fn} W{_{t_s}^{\frac{1}{2}}} &= \left(W{_{t_s}^{-\frac{1}{2}}}\Phi{_\fn} W{_{t_s}^{\frac{1}{2}}}\right){^\top}, \\
            \label{eq:g2}
            W{_{t_s}^{-\frac{1}{2}}}\Phi{_\fn} W{_{t_s}^{\frac{1}{2}}}&\succ 0,
        \end{align}
    \end{subequations}
   it holds that $\Phi{_t^\star}\in \GL$, for  $t\in[0,t_s]$, while $\Phi_{t_s}=\Phi_\fn$.
\end{prop}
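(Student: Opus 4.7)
The plan is to reuse the algebraic factorization developed in the proof of Proposition~\ref{prop:brockett-condition-corrected} and replace the norm bound of \eqref{eq:brockett-sufficient-condition-correct} by a spectral argument that invokes the symmetry and positivity hypotheses \eqref{eq:g1}--\eqref{eq:g2}. The point is that the bound $\|W_{t_s}^{-1/2}\Phi_\fn W_{t_s}^{1/2}-I\|<1$ is sufficient but not necessary, and relaxing it to a structural condition on $W_{t_s}^{-1/2}\Phi_\fn W_{t_s}^{1/2}$ is what will later permit $t_s$ (hence the transit time) to be chosen arbitrarily small.

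Set $P(t) := W_{t_s}^{-1/2} W_t W_{t_s}^{-1/2}$ and $S := W_{t_s}^{-1/2} \Phi_\fn W_{t_s}^{1/2}$. From \eqref{eq:grammian-is-monotonic}, $P(\cdot)$ is symmetric with $0 \prec P(t) \prec I$ on $(0, t_s)$ and $P(t_s) = I$, while by the standing hypotheses $S = S^\top \succ 0$. Reproducing the manipulation in the proof of Proposition~\ref{prop:brockett-condition-corrected} one arrives at
\[
\Phi_t^\star = e^{A_c t}\, W_{t_s}^{1/2}\, M(t)\, W_{t_s}^{-1/2},\qquad M(t) := I + P(t)(S - I),
\]
so it suffices to show that $M(t)$ is invertible for all $t \in [0, t_s]$. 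The terminal identity $\Phi_{t_s}^\star = \Phi_\fn$ then follows automatically from $P(t_s) = I$ together with $e^{A_c t_s} = I$.

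The key step will be a similarity transformation. For $t = 0$ one has $M(0) = I$; for $t \in (0, t_s]$, $P(t) \succ 0$, so I would conjugate $M(t)$ by $P(t)^{1/2}$. Using that $P(t)$ commutes with its own powers, a short computation gives
\[
P(t)^{-1/2} M(t)\, P(t)^{1/2} \;=\; \bigl(I - P(t)\bigr) + P(t)^{1/2}\, S\, P(t)^{1/2}.
\]
This is the crux of the argument: the first summand is symmetric positive semi-definite (strictly positive for $t<t_s$, and zero at $t=t_s$), while the second is symmetric positive definite because $S\succ 0$ and $P(t)^{1/2}$ is invertible. Their sum is symmetric positive definite, hence invertible, and so $M(t)$ is invertible by similarity.

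To close the argument, I would observe that $\det \Phi_t^\star$ is continuous on $[0, t_s]$, nowhere vanishing by the above, and satisfies $\det \Phi_0^\star = 1$; hence $\det \Phi_t^\star > 0$ throughout, i.e., $\Phi_t^\star \in \GL$. The main subtlety to flag is that the symmetry hypothesis \eqref{eq:g1} is exactly what makes the conjugated matrix symmetric; absent \eqref{eq:g1}, the product $P(t)S$ need not have real spectrum, and the positive-definiteness argument collapses. Thus \eqref{eq:g1}--\eqref{eq:g2} is the structurally correct replacement for the norm bound of Proposition~\ref{prop:brockett-condition-corrected}, and it is precisely this replacement that will make the subsequent factorization-and-concatenation proof of strong controllability go through for any $t_s>0$.
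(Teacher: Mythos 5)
Your proof is correct and follows essentially the same route as the paper's: both reuse the factorization $\Phi_t^\star = \mathrm{e}^{A_c t} W_{t_s}^{1/2}\bigl(I + P(t)(S-I)\bigr) W_{t_s}^{-1/2}$ from Proposition~\ref{prop:brockett-condition-corrected} and show invertibility by conjugating with $P(t)^{1/2}$ (equivalently, passing to $I + P^{1/2}(S-I)P^{1/2} \succeq P^{1/2} S P^{1/2} \succ 0$), exploiting exactly the symmetry and positivity hypotheses \eqref{eq:g1}--\eqref{eq:g2}. Your explicit treatment of $t=0$ and the closing determinant-sign continuity argument are minor, harmless refinements of the paper's version.
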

\begin{proof}
 We first claim that      
     \begin{equation}\label{eq:claim}
  I + \underbrace{W_{t_\fn}^{-\frac12} W_t W_{t_\fn}^{-\frac12}}_{S_t}\big(\underbrace{W_{t_\fn}^{-\frac12}\Phi_\fn W_{t_\fn}^{\frac12}}_{M} -I\big)
 \end{equation}
    is invertible under \eqref{eq:georgiou-sufficient-condition}. 
    To see this, note that the statement is trivially true when $t=0$, since $S_0=0$. 
    By continuity, the statement is true for some interval $[0,t_1)$. 
    The spectrum of $I+S_t(M-I)$ coincides with the spectrum of the matrix $I+S_t^\frac12 (M-I)S_t^\frac12$, by similarity transformation. From the fact that $0\prec S_t \preceq I$ for all $t\in(0,t_\fn]$, we have that
    \[
    I+S_t^\frac12 (M-I)S_t^\frac12 \succeq
    S_t^\frac12 M S_t^\frac12\succ 0.
    \]
    Having established that the expression in \eqref{eq:claim} is invertible, and inspecting 
     \eqref{eq:invertibility} in the proof of Proposition \ref{prop:brockett-condition-corrected}, 
     we observe that $\Phi^\star_t$ is invertible as well. This completes the proof.
\end{proof}

\begin{remark}
    It is interesting to note that the class of matrices that satisfy \eqref{eq:georgiou-sufficient-condition} is a rather thin set, since \eqref{eq:g1} is an algebraic constraint on its elements. Thus, \eqref{eq:georgiou-sufficient-condition} is more stringent than \eqref{eq:brockett-sufficient-condition-correct}. Yet, this class of functions is still unbounded. The structure of this class, in the symmetry condition \eqref{eq:g1} and the fact that it is unbounded, suffices to prove strong controllability by providing points for motion primitives as before. $\Box$
\end{remark}



Next, we bring attention to a relatively unknown fact, that any matrix with a positive determinant can be written as product of {\em finitely many} symmetric positive definite matrices. This fact follows from \cite[Proposition 4]{abdelgalil2024sub} that was established using Geometric Control. A rather remarkable result on such a factorization appeared much earlier in \cite{ballantine1967products,ballantine1968aproducts,ballantine1968bproducts} and was based on purely algebraic techniques. This earlier work established that at most five factors are needed. For specificity, we refer to this exact characterization on the number of factors in the proof of the subsequent theorem (Theorem \ref{thm:controllability-on-GL}).

\begin{thm}[Theorem 5 in \cite{ballantine1968bproducts}]\label{fact:ballantine-SPD-product}
    Every $\Phi\in \GL$ can be written as a product
    \begin{align*}
        \Phi = \oprod_{k=1:5} \Phi_k,
    \end{align*}
    for some symmetric positive definite matrices $\{\Phi_1,\dots,\Phi_5\}$. 
\end{thm}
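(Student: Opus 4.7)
The plan is to reduce the five-factor claim to a simpler two-factor characterization and then exploit the flexibility of three-factor products to absorb the obstructions that block shorter factorizations. Specifically, I would aim to write $\Phi = N \cdot M$ where $N$ is a product of three SPD matrices and $M = \Phi N^{-1}$ is a product of two SPD matrices, yielding $3+2 = 5$ factors in total.

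First, I would establish (or recall from the earlier Ballantine papers) the two-factor characterization: $M \in \GL$ is a product of two SPD matrices if and only if $M$ is similar to an SPD matrix, i.e., diagonalizable with positive real eigenvalues. The easy direction is that $P_1 P_2 = P_1^{\frac12}(P_1^{\frac12} P_2 P_1^{\frac12})P_1^{-\frac12}$ is similar to the SPD matrix $P_1^{\frac12} P_2 P_1^{\frac12}$. For the converse, writing $M = G D G^{-1}$ with $D$ positive diagonal, the choices $P_1 = G G^\top$ and $P_2 = G^{-\top} D G^{-1}$ are both SPD and satisfy $P_1 P_2 = M$.

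Second, given an arbitrary $\Phi \in \GL$, I would seek a three-SPD product $N = P_3 P_4 P_5$ such that $\Phi N^{-1}$ has positive real spectrum and is diagonalizable; by the two-factor lemma, $\Phi N^{-1}$ would then split as a product of two SPD, producing the five-factor decomposition. The critical technical point is that the set of three-SPD products is strictly larger than that of two-SPD products: it contains matrices with negative real eigenvalues, as witnessed for instance by $N = D_1 (K D_2 K^{-1})$ with $D_1,D_2$ positive diagonal and $K$ a suitable non-orthogonal change of basis (a direct calculation in a $2\times 2$ block shows that the resulting product can be arranged to have two negative real eigenvalues). This extra flexibility is precisely what one needs to absorb obstructions such as $\Phi = -I_{2k}$, for which no four-factor decomposition can exist: a four-factor product splits as $M_1 M_2$ with both $M_i$ similar to SPD, while $M_1 M_2 = -I$ forces $M_2 = -M_1^{-1}$ to have negative real eigenvalues, contradicting similarity to SPD.

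Third, the main obstacle is the case analysis on the real Jordan structure of $\Phi$: the explicit choice of $N$ depends on the eigenvalue pattern of $\Phi$ (all positive real, complex-conjugate pairs, negative real pairs, or mixtures thereof), and each sub-case requires a tailored construction to ensure the residue $\Phi N^{-1}$ lands in the similarity class of SPD. This case-by-case construction constitutes the technical heart of Ballantine's argument in \cite{ballantine1968bproducts}. An alternative derivation via geometric control appears in \cite{abdelgalil2024sub}: symmetric matrices Lie-generate $\mathfrak{gl}(n,\mathbb{R})$, so the semigroup generated by SPD matrices fills $\GL$, and a compactness argument then extracts the uniform five-factor bound.
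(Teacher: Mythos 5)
Your proposal does not actually prove the theorem: the load-bearing step is missing. The two-factor lemma (a product of two SPD matrices is exactly a matrix similar to an SPD matrix) is correct and correctly proved, and the $3+2$ reduction is a sensible skeleton, but the entire content of the theorem is then concentrated in the claim that for \emph{every} $\Phi\in\GL$ one can construct a product $N=P_3P_4P_5$ of three SPD matrices such that $\Phi N^{-1}$ is diagonalizable with positive real spectrum. You do not construct $N$; you note that the construction "depends on the eigenvalue pattern of $\Phi$" and defer the case analysis to Ballantine, which is precisely the technical heart of the result being proved. Even the preliminary claim you lean on --- that three-SPD products can have negative real eigenvalues --- is only asserted via an unexecuted "direct calculation" (and note that a characterization of which spectra are achievable by three-SPD products is itself one of Ballantine's earlier theorems). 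As written, the argument reduces the theorem to a statement essentially as hard as the theorem itself. The side remark that $-I$ (for even $n$) admits no four-factor SPD decomposition is correct and is a nice sanity check that five is needed, but it does not advance the existence proof.

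The fallback route you sketch also does not close the gap. The geometric-control fact cited in the paper (Proposition 4 of \cite{abdelgalil2024sub}) yields a factorization into \emph{finitely many} SPD factors, with no a priori bound of five, and the compactness argument you invoke cannot produce a uniform constant on the non-compact group $\GL$; the paper itself is careful to distinguish the "finitely many factors" statement from Ballantine's exact five-factor bound. For context, the paper offers no proof of this statement at all: it is imported verbatim as Theorem 5 of \cite{ballantine1968bproducts}, whose purely algebraic case-by-case construction is exactly the part your proposal leaves out. To make your write-up honest, either cite Ballantine for the existence of the three-SPD factor $N$ with the required residue (at which point the whole theorem might as well be cited), or carry out the Jordan-form case analysis explicitly.
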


Before estabilishing strong controllability, we need one more lemma, suitably recasting the statement of Theorem \ref{fact:ballantine-SPD-product}.

\begin{prop}\label{prop:SPD-product}
    Assume that the pair $(A,B)$ satisfies the Kalman rank condition, $t_s>0$, and that for some $K_c$ (without the need to satisfy \eqref{eq:pole-placement-periodic}) we compute $W_{t_s}$ as before.
    Then, any $\Phi_\fn\in\GL$ can be written as the product
    \begin{align*}
        \Phi_\fn = \oprod_{k=1:5}\Phi_k,
    \end{align*}
    of matrices
    $\{\Phi_1,\Phi_2,\dots,\Phi_5\}$ that each satisfies \eqref{eq:georgiou-sufficient-condition}.
\end{prop}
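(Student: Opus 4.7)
The plan is to observe that condition \eqref{eq:georgiou-sufficient-condition} is precisely the statement that $W_{t_s}^{-\frac12}\Phi W_{t_s}^{\frac12}$ is symmetric positive definite. Hence the set of matrices satisfying \eqref{eq:georgiou-sufficient-condition} is exactly the conjugated cone
\[
\mathcal{C} \;=\; \bigl\{\, W_{t_s}^{\frac12} S\, W_{t_s}^{-\frac12} \;:\; S \succ 0 \text{ and } S = S^\top \,\bigr\}.
\]
So the proposition will follow if we can express any $\Phi_\fn\in\GL$ as a product of $5$ elements of $\mathcal{C}$, and Ballantine's theorem (Theorem \ref{fact:ballantine-SPD-product}) furnishes exactly such a factorization after a single similarity transformation.

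Concretely, first I would form the conjugated matrix
\[
\tilde\Phi_\fn \;=\; W_{t_s}^{-\frac12}\,\Phi_\fn\, W_{t_s}^{\frac12},
\]
which lies in $\GL$ because conjugation by the positive definite matrix $W_{t_s}^{\frac12}$ preserves the determinant sign and invertibility. Then I would invoke Theorem \ref{fact:ballantine-SPD-product} on $\tilde\Phi_\fn$ to obtain symmetric positive definite matrices $\tilde\Phi_1,\dots,\tilde\Phi_5$ with $\tilde\Phi_\fn = \oprod_{k=1:5}\tilde\Phi_k$. Finally, I would define
\[
\Phi_k \;=\; W_{t_s}^{\frac12}\,\tilde\Phi_k\, W_{t_s}^{-\frac12}, \qquad k=1,\dots,5.
\]

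Two things remain to check, both routine. One, each $\Phi_k$ satisfies \eqref{eq:georgiou-sufficient-condition}: by construction $W_{t_s}^{-\frac12}\Phi_k W_{t_s}^{\frac12} = \tilde\Phi_k$, which is symmetric and positive definite, giving both \eqref{eq:g1} and \eqref{eq:g2}. Two, the product telescopes correctly: using the cancellation of adjacent factors $W_{t_s}^{-\frac12}W_{t_s}^{\frac12}=I$,
\[
\oprod_{k=1:5}\Phi_k \;=\; W_{t_s}^{\frac12}\Bigl(\oprod_{k=1:5}\tilde\Phi_k\Bigr) W_{t_s}^{-\frac12} \;=\; W_{t_s}^{\frac12}\,\tilde\Phi_\fn\, W_{t_s}^{-\frac12} \;=\; \Phi_\fn,
\]
as required.

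There is no substantive obstacle here: the whole proposition is essentially a conjugacy-invariant restatement of Ballantine's theorem. The hypothesis on $(A,B)$ and $K_c$ enters only to guarantee, via the Kalman rank condition, that $W_{t_s}\succ 0$ so that $W_{t_s}^{\pm 1/2}$ are well-defined (note that periodicity \eqref{eq:pole-placement-periodic} is not needed, consistent with the statement of the proposition). The only care needed is to keep the ordering of the factors straight through the $\oprod$ notation, so that the similarity transformation threads through the product without reversing it.
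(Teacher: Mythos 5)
Your proposal is correct and is essentially identical to the paper's own proof: both conjugate $\Phi_\fn$ by $W_{t_s}^{-\frac12}$, apply Ballantine's theorem (Theorem \ref{fact:ballantine-SPD-product}) to the conjugated matrix, and conjugate each symmetric positive definite factor back to obtain the $\Phi_k$ satisfying \eqref{eq:georgiou-sufficient-condition}. You simply spell out the telescoping and the verification of \eqref{eq:g1}--\eqref{eq:g2} that the paper leaves implicit.
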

\begin{proof}
Apply Theorem \ref{fact:ballantine-SPD-product} to conclude that
\[
W_{t_\fn}^{-\frac12} \Phi_\fn W_{t_\fn}^{\frac12}=\oprod_{k=1:5} \Psi_k
\]
for symmetric positive definite  $\Psi_k$, for $k\in\{1,\ldots,5\}$, and define $\Phi_k= W_{t_\fn}^{\frac12}\Psi_kW_{t_\fn}^{-\frac12}$.
\end{proof}

We are now in a position to prove that under the Kalman rank condition the bi-linear system \eqref{eq:right-invariant-system} is strongly controllable on $\GL$.

\vspace*{0.1cm}\begin{thm}\label{thm:controllability-on-GL}
    The following statements are equivalent:
    \begin{enumerate}
      \item[(i)] The pair $(A,B)$ satisfies the Kalman rank condition.     
      \item[(ii)] System \eqref{eq:right-invariant-system} is controllable on $\GL$.
        \item[(iii)] System \eqref{eq:right-invariant-system} is strongly controllable on $\GL$.
    \end{enumerate}
\end{thm}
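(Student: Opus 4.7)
The plan is to close the ring of implications \textit{(iii)} $\Rightarrow$ \textit{(ii)} $\Rightarrow$ \textit{(i)} $\Rightarrow$ \textit{(iii)}. The first implication is immediate from the definitions, since strong controllability prescribes the terminal time $t_\fn$ arbitrarily in advance, which in particular yields the existence required for ordinary controllability.

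For \textit{(ii)} $\Rightarrow$ \textit{(i)}, I would argue by contrapositive. Suppose the Kalman rank condition fails and let $\mathcal{C}=\mathrm{Im}(B)+\mathrm{Im}(AB)+\cdots+\mathrm{Im}(A^{n-1}B)$, which is then a proper subspace of $\mathbb{R}^n$. Any trajectory $\Phi_\cdot$ of the bi-linear system \eqref{eq:right-invariant-system} starting at $I$ is simultaneously a trajectory of the linear system $\dot\Phi_t=A\Phi_t+BU_t$ under the open-loop input $U_t=K_t\Phi_t$. Hence, column by column, $\Phi_{t_\fn}-\mathrm{e}^{At_\fn}$ must have image contained in $\mathcal{C}$. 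This containment fails for, e.g., $\Phi_\fn=2\,\mathrm{e}^{At_\fn}\in\GL$, whose offset from $\mathrm{e}^{At_\fn}$ is invertible, showing that \eqref{eq:right-invariant-system} cannot reach all of $\GL$.

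The substantive step is \textit{(i)} $\Rightarrow$ \textit{(iii)}. Given an arbitrary $t_\fn>0$ and a prescribed $\Phi_\fn\in\GL$, I would set $t_s=t_\fn/5$ and, invoking pole placement under the Kalman rank condition, choose $K_c$ so that the eigenvalues of $A_c=A+BK_c$ are distinct and of the form $0$ or $\pm 2\pi\mathrm{i}\, k/t_s$ for nonzero integers $k$ (in conjugate pairs), which makes $\mathrm{e}^{A_ct_s}=I$. Computing $W_{t_s}$, I would then invoke Proposition \ref{prop:SPD-product} to factor $\Phi_\fn=\Phi_5\Phi_4\Phi_3\Phi_2\Phi_1$ with each $\Phi_k$ satisfying the Georgiou sufficient condition \eqref{eq:georgiou-sufficient-condition}. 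By Proposition \ref{prop:georgiou-sufficient-condition}, for each $k\in\{1,\ldots,5\}$ there is a minimum-energy trajectory $\Phi^{k,\star}_\cdot$ on an interval of length $t_s$ that steers \eqref{eq:right-invariant-system} from $I$ to $\Phi_k$ while remaining in $\GL$. Concatenating these five segments exactly as in the proof of Theorem \ref{prop:brockett-controllability-corrected}, with right-translation by the accumulated product so that the $k$th segment starts from where the previous one ended, produces a trajectory of \eqref{eq:right-invariant-system} that reaches $\Phi_\fn$ precisely at $t=5t_s=t_\fn$.

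The main technical obstacles have already been discharged by Propositions \ref{prop:georgiou-sufficient-condition} and \ref{prop:SPD-product}; the decisive new ingredient relative to Theorem \ref{prop:brockett-controllability-corrected} is that Ballantine's bound of five SPD factors is \emph{independent} of $\Phi_\fn$, so the pole-placement time $t_s$ can be chosen as $t_\fn/5$ upfront. This decouples the number of motion-primitive segments from the transit-time budget that was, in the earlier controllability proof, forced to grow with the size of $\Phi_\fn$.
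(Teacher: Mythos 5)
Your proof of (i) $\Rightarrow$ (iii) is essentially the paper's own argument: choose $t_s=t_\fn/5$ and $K_c$ with $\mathrm{e}^{A_ct_s}=I$, factor $\Phi_\fn$ into five matrices satisfying \eqref{eq:georgiou-sufficient-condition} via Proposition \ref{prop:SPD-product}, steer to each factor within time $t_s$ by Proposition \ref{prop:georgiou-sufficient-condition}, and concatenate with right-translation; your closing remark that Ballantine's bound of five factors is independent of $\Phi_\fn$, which is what lets the time budget be fixed in advance, is exactly the point. The implication (iii) $\Rightarrow$ (ii) is indeed immediate, as in the paper.

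The gap is in your (ii) $\Rightarrow$ (i) step. Your witness $\Phi_\fn=2\,\mathrm{e}^{At_\fn}$ depends on the terminal time, so what your argument shows is that for each fixed $t_\fn$ the \emph{exact-time} reachable set at $t_\fn$ misses some element of $\GL$; that refutes strong controllability, i.e., it proves $\neg$(i)$\,\Rightarrow\neg$(iii), not $\neg$(i)$\,\Rightarrow\neg$(ii). Controllability in the sense of (ii) leaves the terminal time free, so you must exhibit a single target unreachable at \emph{every} time. Your specific witness does not do this: take $n=2$, $A=I$, $B=(1,0)^\top$, for which the Kalman condition fails; along any trajectory of \eqref{eq:right-invariant-system} the second row of $\Phi_t$ is forced to equal $\mathrm{e}^{t}(0,1)$, while the first row can be steered freely with $\Phi_{11}>0$, so $2\mathrm{e}^{At_\fn}=2\mathrm{e}^{t_\fn}I$ \emph{is} reached at time $t_\fn+\ln 2$ without leaving $\GL$. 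As written, your chain only yields (i) $\Leftrightarrow$ (iii) $\Rightarrow$ (ii), and the three-way equivalence does not close. The step is repairable along the lines you started: pick a nonzero $q$ with $q^\top A^kB=0$ for all $k$, so that $q^\top\Phi_t=q^\top\mathrm{e}^{At}$ along every trajectory; the attainable rows $\{q^\top\mathrm{e}^{At}:t\geq 0\}$ form a one-parameter curve, which cannot exhaust $\{q^\top X:X\in\GL\}=\mathbb{R}^{1\times n}\setminus\{0\}$ when $n\geq 2$ (the case $n=1$ is vacuous since $B$ has full column rank), so some fixed $\Phi_\fn\in\GL$ is unreachable at every time. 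The paper dismisses (ii) $\Rightarrow$ (i) as trivial, so this is the only place where your write-up needs an actual correction.
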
\vspace*{0.1cm}
\begin{proof}
The implications (iii) $\Rightarrow$ (ii) and (ii) $\Rightarrow$ (i) are trivial. Thus, we only need to prove that (i) $\Rightarrow$ (iii).

Consider an arbitrary $\Phi_\fn\in\GL$ and any $t_\fn>0$.
Let $K_c$ be chosen so that \eqref{eq:pole-placement-periodic} holds for $t_s=t_\fn/5$.
Compute $W_{t_s}$ as before, using \eqref{eq:W}.
From Proposition \ref{prop:SPD-product}, there exist five positive definite matrices $\{\Phi_1,\dots,\Phi_5\}$ that satisfy \eqref{eq:georgiou-sufficient-condition} for $t_\fn/5$, and are such that
    \begin{align*}
        \Phi_\fn = \oprod_{k=1:5} \Phi_k.
    \end{align*}
    From Proposition \ref{prop:georgiou-sufficient-condition}, there exists respective feedback gains $K{_\cdot^k}:[0,t_\fn/5]\rightarrow\mathbb{R}^{m\times n}$ that steer \eqref{eq:right-invariant-system} from $I$ to $\Phi_{k}$. By concatenating the application of the feedback gains $K{_\cdot^k}$, over successive intervals of duration $t_s$, we obtain a control gain $K{_\cdot}:[0,t_\fn]\rightarrow\mathbb{R}^{m\times n}$ that steers \eqref{eq:right-invariant-system} from $I$ to $\Phi_\fn$ over the specified interval $[0,t_\fn]$. Since $\Phi_\fn\in\GL$ and $t_\fn>0$ are arbitrary, it follows that \eqref{eq:right-invariant-system} is strongly controllable.
\end{proof}

\begin{remark}
An alternative path to proving strong controllability can be based on some deep  and technical results in geometric control  \cite{jurdjevic1997geometric}. We briefly sketch this alternative line of arguments. A necessary and sufficient condition \cite[Theorem 12, p.\ 89]{jurdjevic1997geometric} for strong controllability is that the so-called \emph{strong Lie saturate} (see \cite[Definition 8, p.87]{jurdjevic1997geometric}) of the family of vector fields corresponding to \eqref{eq:right-invariant-system} generates the entire tangent space of $\GL$ at $\Phi$, for every $\Phi\in\GL$. To verify that this is the case requires explicit construction of the strong Lie saturate. Carrying out the necessary steps is a lengthy and rather involved process, that we chose not to include herein--one needs to explicitly construct a sequence of \emph{prolongations} of the family of vector fields corresponding to \eqref{eq:right-invariant-system} in a manner that preserves the closure of reachable sets. The procedure is similar to the one Jurdjevic outlines in the proof of \cite[Theorem 10 in Chapter 3]{jurdjevic1997geometric}). $\Box$
\end{remark}

\section{A topological obstruction to the continuity of a universal feedback gain}\label{sec:topology}

As noted earlier, \eqref{eq:minimum_energy_input} provides a universal expression for an {\em open-loop control law} that steers the linear dynamics \eqref{eq:linear-sys} from the identity to $\Phi_\fn$. This expression is continuous in the problem specifications, namely, in $\Phi_\fn$. We also saw that a similar construction for a {\em feedback control law}, in the form of a time-varying feedback gain matrix $K_\cdot$, is not possible, at least when relying on open-loop optimal control. The constructions that were used to establish controllability and strong controllability, provide no guarantee that the law is continuous in the problem data. Thus, it is natural to ask whether such a closed-form continuous expression for some feedback gain matrix $K_\cdot$ is at all possible for \eqref{eq:right-invariant-system} and, more broadly, in what instances such an endeavour may admit an answer in the affirmative.

In the process of explaining why such a formula {\em does not exist} for  the bi-linear system \eqref{eq:right-invariant-system}, it is instructive to compare first with \eqref{eq:linear-sys}, and second, with the continuous-time Lyapunov equation
\begin{align}\label{eq:lyapunov-equation}
    \dot{\Sigma}{_t}&= (A+BK{_t})\Sigma{_t} + \Sigma{_t} (A+B K{_t})^\top.
\end{align}
In the case of \eqref{eq:lyapunov-equation} the control parameter is again $K_\cdot$, and the task is to steer\footnote{$\mathrm{Sym}^+(n)$ denotes the cone of symmetric positive definite matrices.} $\Sigma{_t}\in\mathrm{Sym}^+(n)$, from the identity to some $\Sigma_\fn$. For both of these cases, \eqref{eq:linear-sys} and \eqref{eq:lyapunov-equation}, control laws that are continuous with respect to the specifictions do exist.
Considering these three cases (\ref{eq:linear-sys},\ref{eq:lyapunov-equation},\ref{eq:right-invariant-system}) together helps underscore the nature of the topological obstruction for the case of \eqref{eq:right-invariant-system}.

\begin{remark}
  A goal in Brockett's work \cite{brockett2007optimal} was to ascertain first the controllability of \eqref{eq:right-invariant-system} in order to establish the controllability of 
\eqref{eq:lyapunov-equation}, which amounts to
the Liouville equation for the case of linear dynamics. Brockett's original plan carries through, in that the (strong) controllability of \eqref{eq:lyapunov-equation} can be deduced from the (strong) controllability of \eqref{eq:right-invariant-system}, that has already been established in Theorem \ref{thm:controllability-on-GL}. The pertinent argument is detailed in Lemma \ref{lem:lyapunov-equation-controllability} in the Appendix. We note in passing that, following an independent route, the strong controllability of \eqref{eq:lyapunov-equation} has been established in \cite{chen2015optimal,chen2016optimal}.
We also note that Brockett further conjectured \cite[Remark 1, p.\ 30]{brockett2007optimal} that there should be a solution for \eqref{eq:lyapunov-equation} and \eqref{eq:right-invariant-system} that depends smoothly on the problem data, and while the first is valid the second fails.  
$\Box$\end{remark}

\subsection{Continuity of control laws to specifications}

We begin by providing an abstract formulation for the problem at hand,
to assess whether a solution that is continuous in the problem data exists.

\begin{prop}
Consider a continuous system of controlled dynamics that evolves on a state manifold $\mathcal S$. Assume further that the system is strongly controllable and that a continuous control law can be specified as a function of any terminal states $S_\fn \in \mathcal S$  that can generate a state trajectory $S_\cdot$ from a given $S_\init$ to $S_\fn$, over an interval $[0,t_\fn]$. Then, $\mathcal S$ is contractible.
\end{prop}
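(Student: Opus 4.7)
The plan is to use the hypothesized continuous control law to construct an explicit contraction of $\mathcal{S}$ onto the initial state $S_\init$, thereby exhibiting a homotopy between the identity map on $\mathcal{S}$ and a constant map. Recall that $\mathcal{S}$ is contractible precisely when such a homotopy exists, so constructing one will finish the argument.

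Concretely, fix $t_\fn > 0$ (any positive value will do, since strong controllability guarantees a trajectory over any specified time interval). For each $S_\fn \in \mathcal{S}$, let $u(\cdot\,;S_\fn)$ be the hypothesized control input, depending continuously on $S_\fn$, and let $S_t(S_\fn)$ denote the resulting trajectory, which by assumption satisfies $S_0(S_\fn) = S_\init$ and $S_{t_\fn}(S_\fn) = S_\fn$. Define
\begin{equation*}
    H \colon \mathcal{S} \times [0,1] \to \mathcal{S}, \qquad H(S_\fn,\tau) = S_{\tau t_\fn}(S_\fn).
\end{equation*}
By construction, $H(\,\cdot\,,0)$ is the constant map at $S_\init$ and $H(\,\cdot\,,1)$ is the identity on $\mathcal{S}$. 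If $H$ is jointly continuous, this is exactly a contraction homotopy.

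The step I expect to require most care is the verification of \emph{joint} continuity of $H$ in $(S_\fn,\tau)$. Continuity in $\tau$ for fixed $S_\fn$ is immediate since $t \mapsto S_t(S_\fn)$ is a solution to the controlled dynamics. For continuity in $S_\fn$, one invokes the standard result on continuous dependence of ODE flows on parameters: because the dynamics are continuous (as stipulated) and $S_\fn \mapsto u(\,\cdot\,;S_\fn)$ is continuous into the relevant space of controls, the map $S_\fn \mapsto S_\cdot(S_\fn)$ is continuous into the space of trajectories (e.g., $C([0,t_\fn],\mathcal{S})$ with the uniform topology). Composition with the evaluation $(\gamma,\tau) \mapsto \gamma(\tau t_\fn)$, which is jointly continuous on $C([0,t_\fn],\mathcal{S}) \times [0,1]$, yields the desired joint continuity of $H$.

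With $H$ in hand, the identity $1_{\mathcal{S}} = H(\,\cdot\,,1)$ is homotopic to the constant map $H(\,\cdot\,,0) \equiv S_\init$, which is by definition what it means for $\mathcal{S}$ to be contractible. The contrapositive is the usable form for the sequel: if $\mathcal{S}$ fails to be contractible, as is the case for $\GL$ (which is not even simply connected, as already noted in the introduction), then no continuous selection of control laws parametrized by the terminal specification can exist, recovering the claimed topological obstruction for the bi-linear system \eqref{eq:right-invariant-system}.
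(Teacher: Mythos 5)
Your proposal is correct and follows essentially the same route as the paper: the trajectories generated by the continuous control law define a homotopy between the identity map on $\mathcal S$ and the constant map at $S_\init$, forcing $\mathcal S$ to be contractible. The only difference is that you spell out the joint continuity of the homotopy explicitly, whereas the paper folds this into the hypothesis that the assumptions ``amount to'' the existence of such a continuous map $\mathfrak C:[0,t_\fn]\times\mathcal S\to\mathcal S$.
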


\begin{proof}
These assumptions amount to the existence of a continuous map
\[
\mathfrak C\;:\; [0,t_\fn]\times \mathcal S \to \mathcal S
\]
with the following properties:
\begin{itemize}
    \item[i)] $\mathfrak{C}(0,S_\fn) = S_\init $, and $\mathfrak{C}(t_\fn,S_\fn) = S_\fn$
    \item[ii)] $\mathfrak{C}(t,S_\fn) \in \mathcal S$, for all $t\in[0,t_\fn]$.
\end{itemize}
The curves $\mathfrak{C}(\cdot,S_\fn)$ are continuous and represent trajectories specified by the control law as functions of $S_\fn$.
This map can readily provide a homotopy to continuously deform
any loop to the point $S_\init$. Equivalently, $\mathfrak C$ is a continuous homotopy between the constant map $S_\fn\to S_\init$ and the identity map $S_\fn\to S_\fn$.
Thus, $\mathcal S$ must be contractible.
\end{proof}

The assumption of strong controllability, as opposed to only controllability, allows the terminal time $t_\fn$ to be independent of $S_\init,S_\fn$. The claim, that $\mathcal S$ is contractible as a topological space amounts to the fundamental group $\pi_1(\mathcal S)$, which consists of equivalent classes of loops in the space under homotopy, is trivial, in that any loops can be continuously shrank to a point \cite[Chapter 3]{singer2015lecture} as sketched in the proof.

\subsection{Representative cases}
Next, we interpret the statement of the proposition for the three cases of interest, the dynamics in (\ref{eq:linear-sys},\ref{eq:lyapunov-equation},\ref{eq:right-invariant-system}).


\subsubsection*{Linear dynamics \eqref{eq:linear-sys}}
In the case of
 \begin{align}\label{eq:linear-sysprime}
     \dot{\Phi}{_t}= A{_c} \Phi{_t} + B U{_t},
     \tag{\ref{eq:linear-sys}$^\prime$}
\end{align}
with control parameter is $U_t\in\mathbb R^{m\times n}$ and state parameter $\Phi_t\in\mathbb R^{n\times n}$, the state space is 
$\mathcal S=\mathbb R^{n\times n}$ which is  contractible. It is readily seen that
\eqref{eq:minimum_energy_input}-\eqref{eq:minimum_energy_state} define a smooth homotopy $\mathfrak{C}:[0,t_\fn]\times \mathbb{R}^{n\times n}\rightarrow\mathbb{R}^{n\times n}$ for which
the conditions i-ii) hold.

\subsubsection*{Lyapunov dynamics \eqref{eq:lyapunov-equation}}
We observe that the dynamics in this case are not linear. Yet, the states $\Sigma_t\in \mathrm{Sym}^+(n)$, and the state space $\mathcal S=\mathrm{Sym}^+(n)$ is contractible.\footnote{To see that $\mathrm{Sym}^+(n)$ is contractible, observe that $\Sigma \mapsto (1-t)I+\Sigma$ is a continuous homotopy between the constant and identity maps, for $t\in[0,1]$.}
Thus, the existence of a control law that depends continuously on $\Sigma_\fn$  is not ruled out by the topology of $\mathcal S$. Indeed, a choice for such a control law is given in the proof of the following proposition.

\begin{prop}
    Consider the dynamical equation  \eqref{eq:lyapunov-equation}, and let $t_\fn>0$, and $\Sigma_\init,\Sigma_\fn\in \mathrm{Sym}^+(n)$.
There exists a control law $K^\star_\cdot$ that continuously depends on the problem data, such that the solution $\Sigma^\star_\cdot$ to 
 \eqref{eq:lyapunov-equation} satisfies 
\begin{itemize}
    \item[i)] $\Sigma^\star_0 = \Sigma_\init $, and $\Sigma^\star_{t_\fn}=\Sigma_\fn$
    \item[ii)] $\Sigma^\star_t\in \mathrm{Sym}^+(n)$, for all $t\in[0,t_\fn]$.
\end{itemize}
\end{prop}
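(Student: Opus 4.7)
My plan is to exhibit a specific feedback law $K^\star_t$ that varies continuously with $(\Sigma_\init,\Sigma_\fn)$. The underlying reason such a law can exist here but not for \eqref{eq:right-invariant-system} is that $\mathrm{Sym}^+(n)$ is an open convex cone, hence contractible, so the topological obstruction of the previous subsection does not apply: there is room to deform one control trajectory into another without ever leaving the state space.

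I first dispose of the special case where $B\in\mathbb{R}^{n\times n}$ is square and invertible, where a self-contained construction is immediate. Pick any data-continuous smooth path $\Sigma^\star_t$ from $\Sigma_\init$ to $\Sigma_\fn$ in $\mathrm{Sym}^+(n)$---for instance the straight-line interpolation
\[
\Sigma^\star_t=(1-t/t_\fn)\Sigma_\init+(t/t_\fn)\Sigma_\fn,
\]
which remains in $\mathrm{Sym}^+(n)$ by convexity of the cone---and set
\[
K^\star_t=B^{-1}\bigl(\tfrac{1}{2}\dot\Sigma^\star_t (\Sigma^\star_t)^{-1}-A\bigr).
\]
Using the symmetry of $\dot\Sigma^\star_t$ and of $\Sigma^\star_t$, a direct calculation verifies $(A+BK^\star_t)\Sigma^\star_t+\Sigma^\star_t(A+BK^\star_t)^\top=\dot\Sigma^\star_t$. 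Continuous, indeed smooth, dependence of $K^\star_t$ on $(\Sigma_\init,\Sigma_\fn)$ is manifest.

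In the general case, where $B\in\mathbb{R}^{n\times m}$ need not be full row rank, the simple inversion above fails, since not every candidate trajectory $\Sigma^\star_t$ is realizable by the Lyapunov dynamics: the constraint $BK_t\Sigma_t+\Sigma_t K_t^\top B^\top=\dot\Sigma_t-A\Sigma_t-\Sigma_t A^\top$ forces structural compatibility between the path and the range of $B$. To handle this, I would invoke the explicit minimum-energy covariance-steering construction of Chen, Georgiou, and Pavon \cite{chen2015optimal,chen2016optimal}. Under the Kalman rank condition, the minimizer of $\int_0^{t_\fn}\lVert K_t \Sigma_t^{1/2}\lVert_\mathrm{F}^2\,\mathrm{d}t$ subject to \eqref{eq:lyapunov-equation} and the boundary conditions $\Sigma_0=\Sigma_\init$, $\Sigma_{t_\fn}=\Sigma_\fn$ takes the closed form $K^\star_t=-B^\top \Pi_t$, where $\Pi_t$ solves a matrix Riccati equation whose boundary data depends smoothly on $(\Sigma_\init,\Sigma_\fn)$. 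Standard continuous dependence of Riccati-equation solutions on boundary data then yields continuity of $K^\star_t$ in the problem data.

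The main obstacle to a short self-contained proof in the general case is that the $\Phi$-lifting constructions of Section \ref{eq:strong} do not obviously produce a continuous feedback: the five-factor decomposition underlying Theorem \ref{thm:controllability-on-GL} is combinatorial and not manifestly continuous in the data, and producing a continuous-in-data lift would require choosing factors smoothly in $\Phi_\fn=\Sigma_\fn^{1/2}\Sigma_\init^{-1/2}$. The Riccati-based construction sidesteps this difficulty by optimizing over the full parameter space of admissible feedback gains, with the optimizer inheriting continuity from the smooth dependence of the Riccati boundary data on $(\Sigma_\init,\Sigma_\fn)$.
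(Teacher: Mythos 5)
Your argument is essentially correct, but it follows a different route from the paper, and your diagnosis of the ``main obstacle'' misses the paper's actual device. The paper does not need the five-factor decomposition at all here: it picks the \emph{single} matrix
\[
\Phi_\fn = W_{t_\fn}^{\frac12}\tilde{\Sigma}_\init^{-\frac12}\bigl(\tilde{\Sigma}_\init^{\frac12}\tilde{\Sigma}_\fn\tilde{\Sigma}_\init^{\frac12}\bigr)^{\frac12}\tilde{\Sigma}_\init^{-\frac12}W_{t_\fn}^{-\frac12},
\qquad \tilde{\Sigma}_{\init/\fn}=W_{t_\fn}^{-\frac12}\Sigma_{\init/\fn}W_{t_\fn}^{-\frac12},
\]
i.e.\ a Grammian-weighted version of the optimal transport map between the two covariances (a modified McCann interpolation \cite{mccann1997convexity}, \cite{chen2016optimal}). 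This $\Phi_\fn$ satisfies the symmetry--positivity condition \eqref{eq:georgiou-sufficient-condition} by construction, so Proposition \ref{prop:georgiou-sufficient-condition} gives a minimum-energy trajectory $\Phi_t^\star$ of \eqref{eq:right-invariant-system} that stays in $\GL$, with an explicit feedback gain; conjugation $\Sigma_t^\star=\Phi_t^\star\Sigma_\init(\Phi_t^\star)^\top$ then solves \eqref{eq:lyapunov-equation} with smooth dependence on $(\Sigma_\init,\Sigma_\fn)$, all in closed form and in one segment. Your treatment of the square-invertible-$B$ case via the straight-line interpolation and $K_t^\star=B^{-1}\bigl(\tfrac12\dot\Sigma_t^\star(\Sigma_t^\star)^{-1}-A\bigr)$ is correct, and your general-case plan---invoking the covariance-steering results of \cite{chen2015optimal,chen2016optimal}, with $K_t^\star=-B^\top\Pi_t$ and $\Pi_t$ solving a Riccati equation---does deliver the statement; indeed the paper itself acknowledges this as an independent route to (strong) controllability of \eqref{eq:lyapunov-equation}. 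What that route costs is self-containment: you must import from those references not only the closed-form boundary data for $\Pi_0$ but also the guarantee that $\Pi_t$ has no finite escape time on $[0,t_\fn]$ for \emph{every} admissible pair $(\Sigma_\init,\Sigma_\fn)$; ``standard continuous dependence of Riccati solutions'' only applies once global existence on the interval is secured, so that step should be stated as part of the imported result rather than as routine ODE theory. The paper's construction buys an explicit, elementary formula for both the trajectory and the gain (given Proposition \ref{prop:georgiou-sufficient-condition}), while your approach buys a characterization of the law as the optimizer of the control energy, at the price of leaning on external machinery.
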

\begin{proof}
    Let $K_c\in\mathbb{R}^{m\times n}$ be such that \eqref{eq:pole-placement-periodic} is satisfied and compute $W_{t_\fn}$, as before. Define
    \begin{align*}
        \tilde{\Sigma}{_\init }&:= W{_{t_\fn}^{-\frac{1}{2}}} \Sigma{_\init }W{_{t_\fn}^{-\frac{1}{2}}}, \;\; \tilde{\Sigma}{_\fn}:= W{_{t_\fn}^{-\frac{1}{2}}} \Sigma_\fn W{_{t_\fn}^{-\frac{1}{2}}},\mbox{ and}\\
        \Phi{_\fn}&:=W{_{t_\fn}^{\frac{1}{2}}}\tilde{\Sigma}{_\init ^{-\frac{1}{2}}}\left(\tilde{\Sigma}{_\init ^{\frac{1}{2}}}\tilde{\Sigma}{_\fn}\tilde{\Sigma}{_\init ^{\frac{1}{2}}}\right)^{\frac{1}{2}}\tilde{\Sigma}{_\init ^{-\frac{1}{2}}} W{_{t_\fn}^{-\frac{1}{2}}}.
    \end{align*}
    By construction, $\Phi{_\fn}$ satisfies \eqref{eq:georgiou-sufficient-condition} and therefore, by Proposition \ref{prop:georgiou-sufficient-condition}, the solution $\Phi^\star_\cdot$ to \eqref{eq:right-invariant-system} for 
    \[
    K^\star_t= B{^\top}\mathrm{e}^{-A{_c^\top}t }W{_{t_\fn}^{-1}}\left(\Phi{_\fn} - I\right)(\Phi_t^\star)^{-1} + K_c,
    \]
    remains in $\GL$ for all $t\in[0,t_\fn]$. It can be seen that 
    \begin{align*}
        \Phi{_0^\star}\Sigma{_\init }(\Phi{_0^\star})^\top &= \Sigma{_\init }, \mbox{ and }\Phi{_{t_\fn}^\star}\Sigma_\init(\Phi{_{t_\fn}^\star})^\top = \Sigma_\fn,
    \end{align*}
    and, by direct differentiation, that 
    \[
    \Sigma_t^\star:= \Phi_t^\star \Sigma_\init (\Phi_t^\star)^\top
    \]
    satisfies \eqref{eq:lyapunov-equation} and remains in $\mathrm{Sym}^+(n)$ over $[0,t_\fn]$. Finally, $K^\star_\cdot,\Sigma_\cdot^\star$ depend smoothly on the problem data and time.
\end{proof}

The construction in the above proof is based on a suitable modification of McCann's displacement interpolation \cite{mccann1997convexity} that was introduced in \cite{chen2016optimal} for cases when the control authority is constrained to channel through an input matrix $B$.

\subsubsection*{Right-invariant bilinear dynamics \eqref{eq:right-invariant-system}}

For the case of \eqref{eq:right-invariant-system} the state space is $\mathcal S=\GL$. However, by the polar decomposition, $\GL$ is homeomorphic to the Cartesian product of $\mathrm{Sym}^+(n)$ with the special orthogonal group $\mathrm{SO}(n)$. Since $\mathrm{Sym}^+(n)$ is contractible, $\pi_1(\GL)=\pi_1(\mathrm{SO}(n))$.
It is well known that $\mathrm{SO}(n)$ is not contractible,
and that in fact, $\pi_1(\mathrm{SO}(2))=\mathbb Z$ and
$\pi_1(\mathrm{SO}(n))=\mathbb Z_2$ for any $n\geq 3$.
Thus, $\GL$ is not contractible (except for $n=1$), and therefore a formula for a universal control law, as sought, does not exist.

\section{Illustrative example}\label{sec:example}
\begin{figure*}
    \centering
    \begin{minipage}[t]{0.485\textwidth}
        \centering
        \includegraphics[width=1\linewidth]{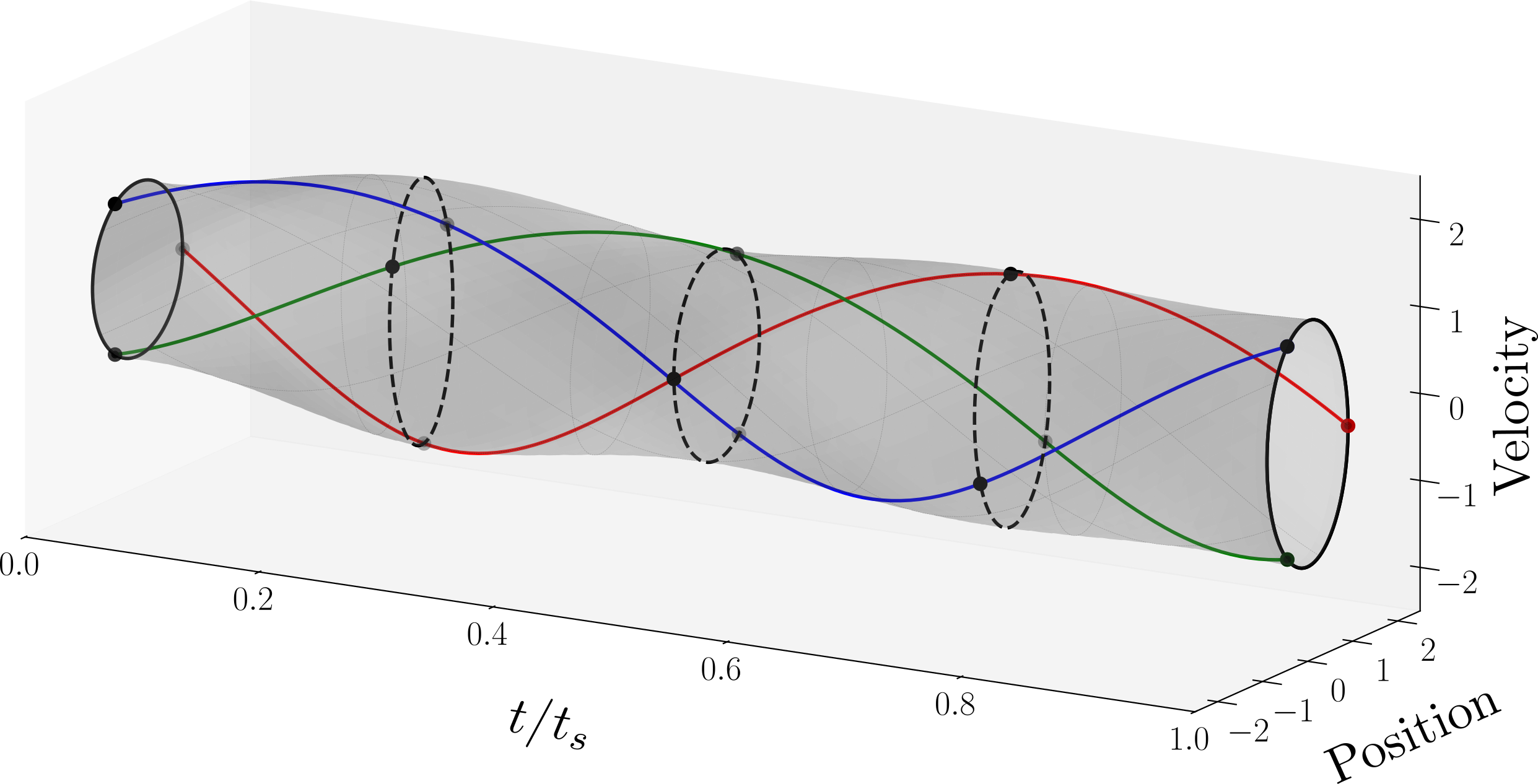}
        \caption{The first segment, i.e. $\Phi_{t}^{1,\star}$ with $t\in[0,t_s]$, in the piecewise smooth curve \eqref{eq:piecewise-smooth-trajectory-exmp}, which starts from $I$ and ends at $\Phi_1$.}
        \label{fig:first-segment-exmp}
    \end{minipage}
    \hfill
    \begin{minipage}[t]{0.485\textwidth}
        \centering
        \includegraphics[width=1\linewidth]{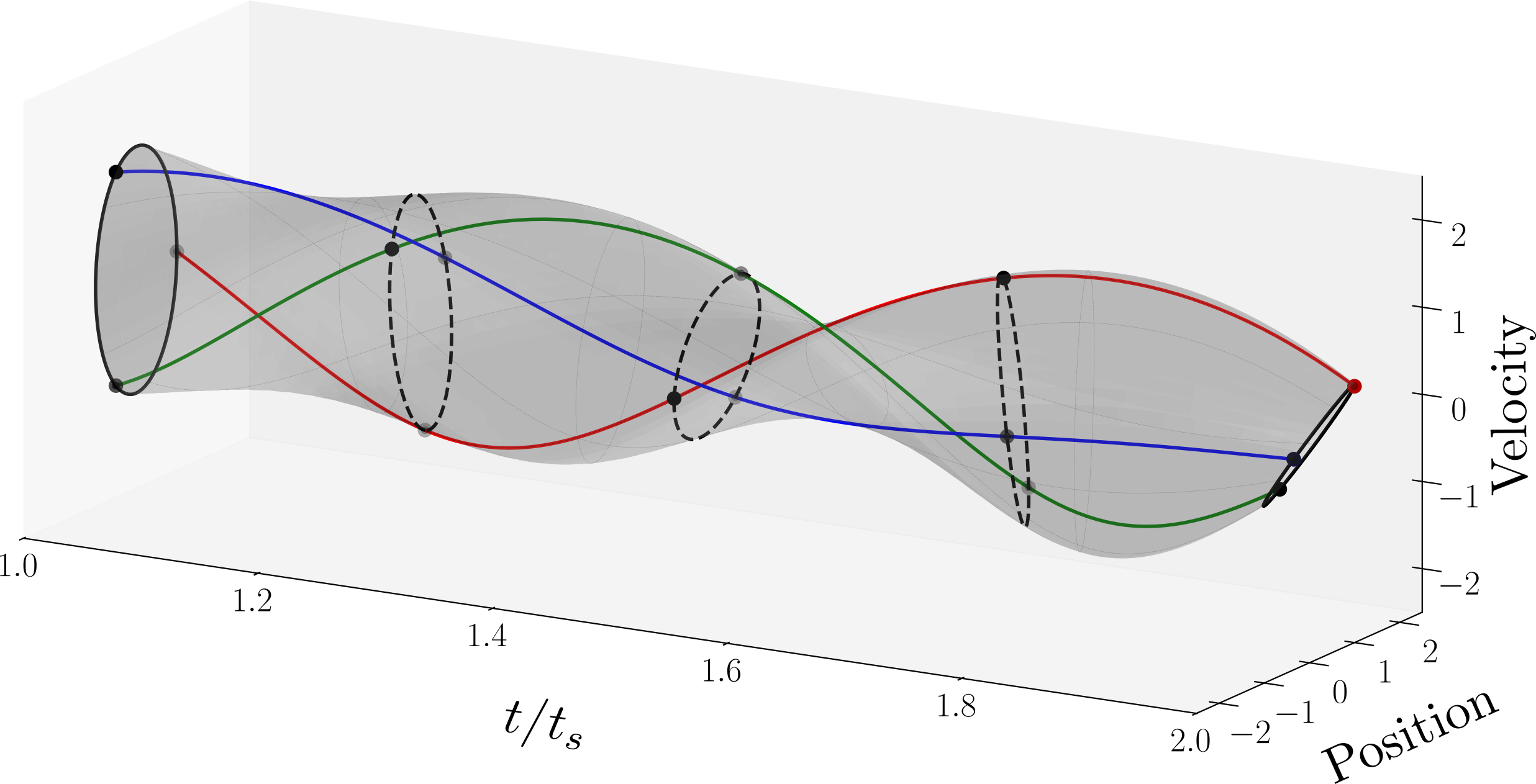} 
        \caption{The second segment, i.e. $\Phi_{t-t_s}^{1,\star}$ with $t\in[t_s,2t_s]$, in the piecewise smooth curve \eqref{eq:piecewise-smooth-trajectory-exmp}, which starts from $\Phi_1$ and ends at $\Phi_2$.}
        \label{fig:second-segment-exmp}
    \end{minipage}
\end{figure*}
In this section, we provide a numerical case study to illustrate the theoretical results presented thus far. Henceforth, we fix $n=2$ and
\begin{align*}
    A&= \begin{bmatrix}
        0 & 1\\ 0 & 0
    \end{bmatrix}, & B&= \begin{bmatrix}
        0 \\ 1\end{bmatrix},
\end{align*}
in \eqref{eq:linear-sys0}, which corresponds to a double integrator, i.e., an inertial system with force as input obeying Newton's law. Steering \eqref{eq:right-invariant-system} amounts to effecting a re-arrangement in phase space, i.e., of the positions and velocities of a collection of inertial particles. 

For any $t_s>0$, we see that the constant gain
\begin{align}\label{eq:K_c-example}
    K_c&=\begin{bmatrix}-\frac{4\pi^2}{t_s^{2}} & 0\end{bmatrix},
\end{align}
guarantees that \eqref{eq:pole-placement-periodictag} holds. Then, with \eqref{eq:K_c-example} as the choice of the constant gain $K_c$, explicit computation gives
\begin{align*}
    W_t W_{t_s}^{-1}&= \frac{t}{t_s} I - \frac{1}{2\pi}\begin{bmatrix}\frac{1}{2}\sin\left(\frac{4\pi t}{t_s}\right) & \frac{t_s}{2\pi}\sin^2\left(\frac{2\pi t}{t_s}\right)\\
    \frac{2\pi}{t_s}\sin^2\left(\frac{2\pi t}{t_s}\right)   & -\frac{1}{2}\sin\left(\frac{4\pi t}{t_s}\right)
    \end{bmatrix},
\end{align*}
for any $t_s>0$ and for all $t\in[0,t_s]$, which is not symmetric, in general. Indeed, at $t=t_s/4$, we see that
\begin{align*}
    W_{\frac{1}{4}t_s} W_{t_s}^{-1}= \frac{1}{4}\begin{bmatrix}
        1 & -\frac{t_s}{\pi^2}\\
        \frac{1}{t_s} & 1
    \end{bmatrix},
\end{align*}
and, as can be verified, we have that
\begin{align*}
    \lim_{t_s\searrow 0}\|W_{\frac{1}{4}t_s} W_{t_s}^{-1}\|= +\infty.
\end{align*}
We now seek a feedback gain $K_\cdot$ that steers \eqref{eq:right-invariant-system} between 
\begin{align*}
    \Phi_0&= I, \mbox{ and } \Phi_\fn= \mathrm{e}^{\frac{\pi}{4}\Omega},
\end{align*}
with $\Omega$ taken to be the skew-symmetric matrix
\begin{align*}
    \Omega=\begin{bmatrix}
        0 & -1\\ 1 & \hphantom{-}0
    \end{bmatrix}.
\end{align*}
In this case, $\Phi_\fn$ amounts to a $45^\circ$ planar rotation. To proceed, we define the symmetric positive definite matrices
\begin{align*}
    S_1&= \begin{bmatrix} 1 & \sigma\\ \sigma & 2\sigma^2
    \end{bmatrix},\\ S_2&= \begin{bmatrix} 2\sigma^2 & \sigma\\ \sigma & 1
    \end{bmatrix},\\
    S_3&= (S_2 S_1^2 S_2)^{-\frac12}S_2 S_1,
\end{align*}
taking $\sigma = \frac14 (\sqrt{17}-3)$. Direct computation shows that
\begin{align*}
    S_3S_2S_1=\Phi_\fn.
\end{align*}
and, therefore, we see that
\begin{align*}
    \Phi_\fn&= \oprod_{k=1:5} \Phi_k,
\end{align*}
where the matrices $\Phi_k$ for $k\in\{1,2,3,4,5\}$ are given by
\begin{align*}
    \Phi_1&=W_{t_s}^{\frac{1}{2}}, & \Phi_2&=W_{t_s}^{\frac{1}{2}}S_1W_{t_s}^{-\frac{1}{2}}, \\ \Phi_3&=W_{t_s}^{\frac{1}{2}}S_2W_{t_s}^{-\frac{1}{2}}, & \Phi_4&=W_{t_s}^{\frac{1}{2}}S_3W_{t_s}^{-\frac{1}{2}}, \\ 
    \Phi_5&= W_{t_s}^{-\frac{1}{2}},
\end{align*}
for every $t_s>0$. Each of these factors, by construction, satisfies the sufficient condition \eqref{eq:georgiou-sufficient-condition}. Therefore, the piecewise smooth curve
\begin{align}\label{eq:piecewise-smooth-trajectory-exmp}
    \Phi_t =
    \begin{cases}
    \Phi^{1,\star}_t, & \mbox{for }0\leq t \leq t_s,\\
    \Phi^{2,\star}_{t-t_s}\Phi_1, & \mbox{for }t_s\leq t \leq 2t_s,\\
    \hspace*{.5cm}\vdots\\
    \Phi^{5,\star}_{t-4t_s} \oprod_{k=2:5} \Phi_{k-1}, & \mbox{for }4t_s\leq t \leq 5t_s,
    \end{cases}
\end{align}
with each segment $\Phi^{k,\star}_\cdot:[0,t_s]\rightarrow \GL$ given by
\begin{align*}
    \Phi^{k,\star}_t:= \mathrm{e}^{A{_c} t}\left(I +  W{_t} W{_{t_s}^{-1}}\left(\Phi{_k}- I\right)\right),
\end{align*}
for $k\in\{1,2,3,4,5\}$,
is a trajectory of the bilinear system \eqref{eq:right-invariant-system}. The corresponding input $K_{\cdot}$ can be computed from \eqref{eq:right-invariant-system}. 

An illustration of the first two segments of the trajectory \eqref{eq:piecewise-smooth-trajectory-exmp} is shown in Figure \ref{fig:first-segment-exmp} and Figure \ref{fig:second-segment-exmp}, respectively. To generate the figures, we use $t_s=4$. We emphasize, however, that the construction provided above works for any $t_s>0$, although the corresponding trajectories may be difficult to visualize. In both figures, the colored lines represent the trajectories of three different ``tracer" particles as the collective undergoes the motion defined by the curve \eqref{eq:piecewise-smooth-trajectory-exmp}.

\section{On the controllability of orientation-preserving diffeomorphisms}\label{sec:diffeomorphisms}

Naturally, our interest in collective steering extends beyond
the linear group of diffeomorphisms $\GL$, to the general group of {\em orientation-preserving diffeomorphisms} $\mathrm{Diff}^+(\mathbb{R}^n)$ of $\mathbb R^n$ that may allow a more versatile repositioning of a collective to a specified terminal configuration. Thus, it is of interest to study the controllability of
\begin{equation}\label{eq:rightinvariant}
    \frac{\partial \varphi_t(x)}{\partial t} = (A\cdot+Bu_t(\cdot))\circ\varphi_t(x).
\end{equation}
In this, as well as in the earlier part of our study, we have been inspired by the work of Brockett \cite{brockett2007optimal} who drew attention to problems pertaining to the controllability of the Liouville equation. Other related works include \cite{agrachev2009controllability,agrachev2022control} and, more recently, \cite{tabuada,raginsky}.

In the present section, developing on the basic idea in Brockett \cite{brockett2007optimal} to devise control primitives that can be used to stitch together a path between states, we establish a reachability result for a class of Lipschitz diffeomorphisms via an approach analogous to that in Section \ref{subsec:next}.

Once again, we assume that the pair $(A,B)$ satisfies the Kalman rank condition and consider the linear system
\begin{align}\label{eq:linear-system-diff}
    \dot{x}&= A_c x + B u, \;\; A_c= A+B K_c,
\end{align}
with $x\in\mathbb R^n$, $ u\in\mathbb R^m$, and $K_c\in\mathbb{R}^{m\times n}$ chosen to satisfy \eqref{eq:pole-placement-periodictag} for some specified $t_s>0$.
The open-loop control input $u_\cdot$ that steers \eqref{eq:linear-system-diff} between terminal conditions $x_\init$ to $x_\fn$ over the interval $[0,t_s]$, selected to minimize
    $\int_0^{t_s} \|u_t\|^2\,\mathrm{d}t$,
is
\begin{align*}
    u^\star_t&= B^\top \mathrm{e}^{-A_c^\top t} W_{t_s}^{-1}(
    x_\fn - x_\init),
\end{align*}
with corresponding state trajectory
\begin{align*}
    x^\star_t&= \mathrm{e}^{A_c t}\left(x_0 + W_t W_{t_s}^{-1}(
    x_\fn - x_\init)\right),
\end{align*}
where, as before, $W_{\cdot}$ is the Grammian specified in \eqref{eq:W}.

If now $\varphi\in \mathrm{Diff}^+(\mathbb{R}^n)$,
then the optimal state trajectory and the associated optimal input connecting $x_\init$ to $\varphi(x_\init)$ over the interval $[0,t_s]$, for all $x_\init\in\mathbb{R}^n$, are given by
\begin{subequations}\label{eq:optimal-pair-diff}
    \begin{align}
        x^\star_t&= \mathrm{e}^{A_c t}\left(x_\init + W_t W_{t_s}^{-1}(
        \varphi(x_\init) - x_\init)\right),\label{eq:optimal-state-diff}\\
        u^\star_t&= B^\top \mathrm{e}^{-A_c^\top t_s} W_{t_s}^{-1}(
        \varphi(x_\init) - x_\init).\label{eq:optimal-input-diff}
    \end{align}
\end{subequations}
Brockett \cite{brockett2007optimal} proposed that if the map $x\mapsto \varphi(x)-x$ is a contraction,
then \eqref{eq:optimal-input-diff} can be written in feedback form
\begin{align}\label{eq:feedback-form-diff}
    u^\star_t = K(x^\star_t,t),
\end{align}
for a suitable continuous feedback law $K:\mathbb{R}^n\times[0,t_s]\rightarrow \mathbb{R}^m$,
for all $x_\init\in\mathbb{R}^n$ and all $t\in[0,t_s]$. The argument in \cite[Lemma 1]{brockett2007optimal} relied on the assumption that
    $\|W_t W_{t_s}^{-1}\|\leq 1$,
which is not true in general. However, Brockett's line of development
can be carried out with a modified condition on the map $\varphi$, similar to Proposition \ref{prop:brockett-condition-corrected}, as stated next.

\begin{prop}
    Under the standing assumptions on $(A,B)$ and $t_s$,
    if $\varphi\in\mathrm{Diff}(\mathbb{R}^n)$ is specified so that the map
    \begin{align}\label{eq:brockett-sufficient-condition-general-correct}
        \psi:x\mapsto W{_{t_s}^{-\frac{1}{2}}}\varphi(W{_{t_s}^{\frac{1}{2}}} x)- x,
    \end{align}
    is a contraction, then there exists a (nonlinear) continuous feedback law $K_\cdot :\mathbb{R}^n\times[0,t_s]\rightarrow \mathbb{R}^m$ such that \eqref{eq:feedback-form-diff} holds for all $x_\init\in\mathbb{R}^n$ and all $t\in[0,t_s]$. 
\end{prop}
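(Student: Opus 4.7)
The plan is to express $x_{\init}$ as a continuous function of the current state $(x^\star_t,t)$ and then substitute back into \eqref{eq:optimal-input-diff} to obtain $u^\star_t$ in feedback form. Since the optimal input is an affine function of $\varphi(x_{\init})-x_{\init}$ with coefficients depending only on $t$, the only real task is to invert the state map in \eqref{eq:optimal-state-diff} jointly continuously in $t$, and this is where the contraction hypothesis will do the work.

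First, I would recast the inversion problem through the same symmetric conjugation used in Proposition \ref{prop:brockett-condition-corrected}. Setting $\xi := W_{t_s}^{-1/2} x_{\init}$, $z := W_{t_s}^{-1/2} e^{-A_c t} x^\star_t$, and $S_t := W_{t_s}^{-1/2} W_t W_{t_s}^{-1/2}$, equation \eqref{eq:optimal-state-diff} becomes
\[
z \;=\; \xi + S_t\,\psi(\xi),
\]
with $\psi$ as in \eqref{eq:brockett-sufficient-condition-general-correct}. The Grammian monotonicity invoked in Proposition \ref{prop:brockett-condition-corrected} yields $0\preceq S_t\preceq I$, and since $S_t$ is symmetric, $\|S_t\|\le 1$ for every $t\in[0,t_s]$.

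Second, I would apply the Banach fixed point theorem with parameters. By hypothesis $\psi$ is Lipschitz with some constant $L<1$, so for each $(z,t)$ the map $\xi \mapsto z - S_t\psi(\xi)$ has Lipschitz constant at most $\|S_t\|\,L\le L$ and therefore admits a unique fixed point $\xi^\sharp(z,t)$. Joint continuity of $(z,t,\xi)\mapsto z - S_t\psi(\xi)$ in all arguments, together with the uniform contraction rate $L$, implies that $\xi^\sharp$ is continuous on $\mathbb{R}^n\times[0,t_s]$. Pulling back, $x_{\init}(x,t):= W_{t_s}^{1/2}\xi^\sharp\!\left(W_{t_s}^{-1/2} e^{-A_c t} x,\,t\right)$ is a continuous function of the state $x$ and time $t$.

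Finally, I would define the feedback law
\[
K(x,t) \;:=\; B^\top e^{-A_c^\top t}\, W_{t_s}^{-1}\bigl(\varphi(x_{\init}(x,t)) - x_{\init}(x,t)\bigr),
\]
which is continuous by composition, and by construction \eqref{eq:feedback-form-diff} holds along the optimal trajectory starting from any $x_{\init}\in\mathbb{R}^n$. The only subtle point is continuity at $t=0$, where $S_0=0$ forces $\xi^\sharp(z,0)=z$; this is consistent with the equation and continuous in $z$, and combined with continuity of $S_t$ in $t$ it confirms that the parametric contraction argument extends cleanly to the closed interval $[0,t_s]$. The main obstacle, if any, is conceptual rather than technical: one must resist the temptation to use $\|W_t W_{t_s}^{-1}\|\le 1$ directly (the flaw Brockett's original argument shared with the linear case) and instead carry out the symmetric conjugation so that $\|S_t\|\le 1$ holds honestly.
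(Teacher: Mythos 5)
Your proposal is correct and follows essentially the same route as the paper: you conjugate by $W_{t_s}^{-1/2}$, observe $\|W_{t_s}^{-1/2}W_tW_{t_s}^{-1/2}\|\le 1$, and solve for $x_\init$ via a parametric Banach fixed-point argument (the paper's map $\eta$ is exactly your $\xi\mapsto z - S_t\psi(\xi)$), then substitute into \eqref{eq:optimal-input-diff}. The only difference is that you spell out the continuity-of-the-fixed-point and feedback-definition steps that the paper delegates to Brockett's Lemma 1 via the Banach fixed-point and implicit function theorems.
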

\begin{proof}
    We re-arrange the expression in \eqref{eq:optimal-state-diff} to obtain
    \begin{align*}
        x_\init&=\mathrm{e}^{-A_c t}x^\star_t-W_t W_{t_s}^{-\frac{1}{2}}\left(W_{t_s}^{-\frac{1}{2}}\varphi(x_\init)-W_{t_s}^{-\frac{1}{2}}x_\init\right),
    \end{align*}
    wherein we used \eqref{eq:pole-placement-periodictag}. Multiplying both sides by $W_{t_s}^{-\frac{1}{2}}$ and introducing the variable $y_\init=W_{t_s}^{-\frac{1}{2}}x_\init$, we obtain that
    \begin{align*}
        y_\init=W_{t_s}^{-\frac{1}{2}}&\mathrm{e}^{-A_c t}x^\star_t-W_{t_s}^{-\frac{1}{2}}W_t W_{t_s}^{-\frac{1}{2}}\psi(y_\init),
    \end{align*}
    for all $y_\init\in\mathbb{R}^n$ and all $t\in[0,t_s]$. It is now clear that the map
    \begin{align*}
        \eta: y_\init\mapsto W_{t_s}^{-\frac{1}{2}}&\mathrm{e}^{-A_c t}x^\star_t-W_{t_s}^{-\frac{1}{2}}W_t W_{t_s}^{-\frac{1}{2}}\psi(y_\init),
    \end{align*}
    is a contraction since
    \begin{align*}
        \|\eta(y_1)-\eta(y_2)\|&\leq \|W_{t_s}^{-\frac{1}{2}}W_t W_{t_s}^{-\frac{1}{2}}\|\|\psi(y_1)-\psi(y_2)\|\\
        &\leq \|\psi(y_1)-\psi(y_2)\|<\|y_1-y_2\|,
    \end{align*}
    wherein the last inequality holds due to the assumption that $\psi$ is a contraction. The remainder of the proof is identical to the argument in \cite[Lemma 1]{brockett2007optimal}. In particular, the claim of the proposition follows by invoking the Banach-fixed point theorem and the implicit function theorem.
\end{proof}

With this result a diffeomorphism that can be expressed as a composition of contractions with a sufficiently small Lipschitz constant can be reached by a trajectory of \eqref{eq:rightinvariant} through an appropriate choice of a control input.

\section{Concluding Remarks}

The controllability of dynamical systems has been a cornerstone concept of modern control theory. Thus, it may be surprising at first that technical issues 
remained unresolved, even for linear dynamics as in the formulation and in the class of problems addressed herein. The present work was influenced by Roger Brockett's work on the Liouville equation in \cite{brockett2007optimal}.
In fact, Brockett was amongst the pioneers who drew attention to the importance of quantifying control authority of systems that evolve on manifolds and Lie groups, and the present work falls within this general frame.

Whereas the controllability of the Liouville equation for linear dynamics is tightly linked to the controllability of dynamics evolving on $\GL$, studying directly dynamical flows on $\GL$ acquires a great deal of added practical significance as it models the evolution of a collection of $n$ identical dynamical systems. This problem of {\em collective steering} has been the main theme and motivation for this work.

A dual problem that is of equal importance is the estimation problem
to extract information about the flow of a collection of particles that obey the same dynamics, from knowledge of a collection of $n$ such particles--{\em tracer particles}. Tracer particles, typically, delineate a trajectory on $\GL$ that encapsulates information about the collective. Specifically, for a swarm of particles obeying identical linear dynamics, the state transition matrix $\Phi_t$ dictates the {\em conditional expectation}
\[
\mathbb E(X_t|X_0=x_0) = \Phi_t x_0,
\]
for particles in the collective. Thus, shaping a trajectory on $\GL$ or, estimating such a trajectory based on observations of the $n$ (or fewer) tracer particles at various points on the flight between terminal points in an interval, is of great importance. This topic, of conditioning the flow via conditional expectations will be taken up in forthcoming work, building on our recent framework in \cite{abdelgalil2024sub} and the results herein.

We conclude with a tribute to Roger Brockett, whose work sparked much of the development in the subject at hand. We have established controllability and strong controllability of the right invariant bilinear system \eqref{eq:right-invariant-system}, following up on his footsteps; to the best of knowledge, this is the first rigorous demonstration of these facts. Although the study of right invariant systems on Lie groups is a classical topic in geometric control, necessary and sufficient conditions for strong controllability in the presence of a non-trivial drift term are relatively sparse. Indeed, most results for systems with a non-trivial drift are primarily concerned with establishing the weaker notion of controllability and, often, rely on special properties of the underlying group such as compactness and semi-simplicity \cite[Chapter 6]{jurdjevic1997geometric}. Our interest in \eqref{eq:right-invariant-system} and, more generally, \eqref{eq:rightinvariant}, stems from their close connection to the theory of optimal mass transport and stochastic optimal mass transport (Schr\"odinger bridges).
We expect that the confluence of ideas from the parallel development of this theory, and the richness of the underlying geometry \cite{abdelgalil2024sub}, will provide fertile new ground for developments in control and estimation of dynamical systems.

\section*{Appendix: Auxiliary Lemmas}
\begin{lem}\label{lem:phi-decomposition}
    Assume $\Phi\in\GL$ and that  $\epsilon$ is a positive constant. Then, there exists a positive integer $N$ such that
\begin{align}\label{eq:phi-decomposition}
    \Phi&=\prod_{k=1}^N\Phi_k \mbox{ with } \|\Phi_k-I\|<\epsilon,
\end{align}
for all $k\in\{1,\ldots, N\}$.
\end{lem}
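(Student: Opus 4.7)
The plan is to realize the factorization by sampling a continuous path in $\GL$ that connects the identity to $\Phi$, and then to define each factor as the ``incremental transition'' between successive samples, so that the product telescopes to $\Phi$ while each individual factor is close to $I$.

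First, I would establish the existence of a continuous curve $\gamma:[0,1]\to\GL$ with $\gamma(0)=I$ and $\gamma(1)=\Phi$. Since $\GL$ is the identity component of the general linear group, it is connected, and being an open subset of $\mathbb{R}^{n\times n}$ it is a smooth manifold, hence path-connected. (A very concrete path can be constructed from the polar decomposition $\Phi=OP$: join $P$ to $I$ along $(1-s)I+sP$, which stays positive definite, and join $O\in\mathrm{SO}(n)$ to $I$ along $\exp(sX)$, where $X\in\mathfrak{so}(n)$ is any logarithm of $O$.)

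Next I would use the compactness of $[0,1]$ to extract two quantitative ingredients. Uniform continuity of $\gamma$ yields, for any $\delta>0$, a modulus allowing us to pick a partition $0=t_0<t_1<\cdots<t_N=1$ with $\|\gamma(t_k)-\gamma(t_{k-1})\|<\delta$ for every $k$. Since $t\mapsto \gamma(t)^{-1}$ is continuous on $[0,1]$, its image is compact and therefore bounded in operator norm by some $M>0$. Given the prescribed $\epsilon$, I then choose the partition fine enough that $\delta<\epsilon/M$.

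Finally I would set
\[
\Phi_k \;:=\; \gamma(t_k)\,\gamma(t_{k-1})^{-1},\qquad k=1,\dots,N.
\]
The product telescopes,
\[
\oprod_{k=1:N}\Phi_k \;=\; \gamma(t_N)\,\gamma(t_0)^{-1} \;=\; \Phi,
\]
(and reindexing gives the order in the statement), while the estimate
\[
\|\Phi_k-I\| \;=\; \bigl\|\bigl(\gamma(t_k)-\gamma(t_{k-1})\bigr)\gamma(t_{k-1})^{-1}\bigr\| \;\le\; M\,\|\gamma(t_k)-\gamma(t_{k-1})\| \;<\; \epsilon
\]
holds for every $k$. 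Each $\Phi_k$ lies in $\GL$ because, for $\epsilon<1$, the bound $\|\Phi_k-I\|<\epsilon$ forces $\Phi_k$ to be invertible with positive determinant (and if the given $\epsilon$ is large, one can apply the argument with $\epsilon'=\min(\epsilon,1/2)$).

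There is no essential obstacle here: the argument is a standard compactness/uniform-continuity discretization once path-connectedness of $\GL$ is invoked. The only point requiring a touch of care is ensuring the inverses $\gamma(t)^{-1}$ remain uniformly bounded, which is handled by the compactness of the image of $\gamma$ inside the open set $\GL$.
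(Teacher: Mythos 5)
Your proof is correct, and it takes the route that the paper explicitly mentions but deliberately does not carry out: the paper's first sentence notes that the claim ``can be argued based on the fact that $\GL$ is a connected Lie group'' via a path from the identity, which is exactly your discretization argument, but the paper instead gives an explicit construction. It invokes a result of W\"ustner that any $\Phi\in\GL$ factors as $\mathrm{e}^{M_2}\mathrm{e}^{M_1}$, and then takes $N_\ell$-th roots of each exponential factor, choosing $N_\ell > \|M_\ell\|/\log(1+\epsilon)$ so that $\|\mathrm{e}^{M_\ell/N_\ell}-I\|\le \mathrm{e}^{\|M_\ell\|/N_\ell}-1<\epsilon$. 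The trade-off: your argument is elementary and self-contained (polar decomposition, uniform continuity, compactness of the image of $t\mapsto\gamma(t)^{-1}$), and it generalizes verbatim to any path-connected matrix group; the paper's argument buys explicit factors and an explicit, computable bound on $N$ in terms of $\|M_1\|,\|M_2\|,\epsilon$, which is convenient for the quantitative use of the lemma (counting motion primitives, hence total steering time) in Theorem \ref{prop:brockett-controllability-corrected}. Two small remarks on your write-up: the bound on the inverses is only as explicit as your modulus of continuity, so your $N$ is not constructive unless you use the concrete polar-decomposition path quantitatively; and your final paragraph about needing $\epsilon<1$ for invertibility is unnecessary, since $\Phi_k=\gamma(t_k)\gamma(t_{k-1})^{-1}$ lies in $\GL$ by construction as a product of elements of $\GL$.
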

\begin{proof}
The claim can be argued based on the fact that $\GL$ is a connected Lie group, and thereby, there is a smooth path from the identity to any point. However, we chose to provide below an explicit construction based on a result in \cite{wustner2003connected}.
    Because $\GL$ is a real connected Lie group, it is shown in \cite{wustner2003connected} that any $\Phi\in\GL$ can be written as 
    \begin{align*}
        \Phi = \mathrm{e}^{M_2}\mathrm{e}^{M_1},
    \end{align*}
    for some matrices $M_1,M_2\in\mathbb{R}^{n\times n}$. Fix any such decomposition. From the series expansion of the exponential map, and for integers $N_1,N_2$, it can be seen
    \begin{align*}
        \|\mathrm{e}^{\frac{1}{N_\ell}M_\ell}-I\| \leq  \mathrm{e}^{\frac{1}{N_\ell}\|M_\ell\|} - 1,
    \end{align*}
    for $\ell\in\{1,2\}$. 
    We select integers
    \[
    N_\ell > \frac{\|M_\ell\|}{\log(1+\epsilon)},
    \]
    and $N=N_1+N_2$. Then the decomposition
    \begin{align*}
        \Phi &= \prod_{k=1}^{N}\Phi_k, & \Phi_k&=\begin{cases} \mathrm{e}^{\frac{1}{N_1}M_1}, & \text{ for } k\in\{1,\dots,N_1\},\\
        \mathrm{e}^{\frac{1}{N_2}M_2}, & \text{ for } k\in\{N_1+1,\dots,N\},
        \end{cases} 
    \end{align*}
    satisfies the conditions of the lemma.
\end{proof}

\begin{lem}\label{lem:lambdas}
    Assume that $\Delta\in\mathbb R^{n\times n}$ satisfies $\|\Delta\|<1$ and that $W$ is a positive definite symmetric matrix. Then,
    \[
    \|W^{\frac12}\Delta W^{-\frac12}\| <\sqrt{\frac{\lambda_{\rm max}(W)}{\lambda_{\rm min}(W)}}.
    \]
\end{lem}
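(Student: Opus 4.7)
The plan is to apply submultiplicativity of the induced $2$-norm and then evaluate the norms of $W^{\frac12}$ and $W^{-\frac12}$ using the spectral theorem. First I would write
\[
\|W^{\frac12}\Delta W^{-\frac12}\| \;\leq\; \|W^{\frac12}\|\cdot\|\Delta\|\cdot\|W^{-\frac12}\|.
\]
Since $W$ is symmetric positive definite, both $W^{\frac12}$ and $W^{-\frac12}$ are symmetric positive definite, so their induced $2$-norms coincide with their largest eigenvalues. The eigenvalues of $W^{\frac12}$ are the positive square roots of the eigenvalues of $W$, hence $\|W^{\frac12}\| = \sqrt{\lambda_{\max}(W)}$, and similarly $\|W^{-\frac12}\| = 1/\sqrt{\lambda_{\min}(W)}$.

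Combining these with the hypothesis $\|\Delta\|<1$ yields the strict bound
\[
\|W^{\frac12}\Delta W^{-\frac12}\| \;<\; \sqrt{\frac{\lambda_{\max}(W)}{\lambda_{\min}(W)}}.
\]
There is no real obstacle here; the only subtlety is to note that the strict inequality is preserved because $\|W^{\frac12}\|$ and $\|W^{-\frac12}\|$ are finite positive numbers, so multiplying a strict inequality by them keeps it strict.
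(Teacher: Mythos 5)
Your proof is correct. The chain
\[
\lVert W^{\frac12}\Delta W^{-\frac12}\rVert \le \lVert W^{\frac12}\rVert\,\lVert\Delta\rVert\,\lVert W^{-\frac12}\rVert < \sqrt{\lambda_{\max}(W)}\cdot\frac{1}{\sqrt{\lambda_{\min}(W)}}
\]
is valid, and the strict inequality survives because $\lVert\Delta\rVert<1$ is multiplied by the two finite positive norms $\lVert W^{\frac12}\rVert=\sqrt{\lambda_{\max}(W)}$ and $\lVert W^{-\frac12}\rVert=1/\sqrt{\lambda_{\min}(W)}$, both computed correctly via the spectral theorem. The paper reaches the same bound by a Loewner-order argument instead: from $\Delta^\top\Delta\preceq I$ and $\frac{1}{\lambda_{\max}(W)}W\preceq I\preceq\frac{1}{\lambda_{\min}(W)}W$ it deduces $W^{-\frac12}\Delta^\top W\Delta W^{-\frac12}\preceq \frac{\lambda_{\max}(W)}{\lambda_{\min}(W)}I$, which is the squared form of the same estimate. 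In substance the two arguments are equivalent (submultiplicativity with symmetric positive definite factors is exactly this eigenvalue bound); yours is the more standard and slightly shorter packaging, while the paper's quadratic-form version makes the ingredients explicit without invoking submultiplicativity.
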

\begin{proof}
    Clearly
    \[
    \frac{1}{\lambda_{\rm max}(W)}W\preceq I \preceq 
    \frac{1}{\lambda_{\rm min}(W)}W.
    \]
    Moreover, since $\Delta^\top \Delta \preceq I$,
    \[
    \frac{1}{\lambda_{\rm max}(W)}\Delta^\top W\Delta \preceq I,
    \]
    and therefore
     \[
    W^{-\frac12}\Delta^\top W\Delta W^{-\frac12} \preceq \frac{\lambda_{\rm max}(W)}{\lambda_{\rm min}(W)}I,
    \]
    from which the result follows.
\end{proof}

\begin{lem}\label{lem:lyapunov-equation-controllability}
    Assume that the bi-linear system \eqref{eq:right-invariant-system} is controllable on $\GL$. Then, the Lyapunov equation \eqref{eq:lyapunov-equation} is controllable on $\mathrm{Sym}^+(n)$.
\end{lem}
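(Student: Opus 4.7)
The plan is to lift the control problem for \eqref{eq:lyapunov-equation} to one on $\GL$, and then invoke the hypothesized controllability of \eqref{eq:right-invariant-system}. The underlying observation is that if $\Phi_\cdot$ satisfies \eqref{eq:right-invariant-system} with some feedback gain $K_\cdot$, then $\Sigma_t := \Phi_t \Sigma_\init \Phi_t^\top$ satisfies \eqref{eq:lyapunov-equation} with the same $K_\cdot$, as can be checked by direct differentiation using the product rule. Moreover, since $\Phi_t\in\GL$ and $\Sigma_\init \in \mathrm{Sym}^+(n)$, this congruence keeps $\Sigma_t$ in $\mathrm{Sym}^+(n)$ for all $t$.

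Given terminal data $\Sigma_\init, \Sigma_\fn \in \mathrm{Sym}^+(n)$, the first step is to construct an element $\Phi_\fn \in \GL$ which achieves the desired congruence, i.e., for which $\Phi_\fn \Sigma_\init \Phi_\fn^\top = \Sigma_\fn$. A convenient choice is
\[
\Phi_\fn \;=\; \Sigma_\fn^{\frac12}\, \Sigma_\init^{-\frac12},
\]
using the unique positive definite square roots. Substituting shows $\Phi_\fn \Sigma_\init \Phi_\fn^\top = \Sigma_\fn^{\frac12}\Sigma_\init^{-\frac12}\Sigma_\init \Sigma_\init^{-\frac12}\Sigma_\fn^{\frac12} = \Sigma_\fn$, and because $\det(\Sigma_\fn),\det(\Sigma_\init)>0$, it follows that $\det(\Phi_\fn)>0$, so $\Phi_\fn \in \GL$.

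The second step invokes the standing hypothesis: since \eqref{eq:right-invariant-system} is controllable on $\GL$, there exist $t_\fn>0$ and an integrable feedback gain $K_\cdot:[0,t_\fn]\rightarrow\mathbb{R}^{m\times n}$ that steers \eqref{eq:right-invariant-system} from $\Phi_0=I$ to $\Phi_{t_\fn}=\Phi_\fn$. Define $\Sigma_t := \Phi_t \Sigma_\init \Phi_t^\top$ for $t\in[0,t_\fn]$. Then $\Sigma_0 = I\cdot\Sigma_\init \cdot I = \Sigma_\init$, $\Sigma_{t_\fn}= \Phi_\fn \Sigma_\init \Phi_\fn^\top = \Sigma_\fn$, and $\Sigma_t\in\mathrm{Sym}^+(n)$ throughout. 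Differentiating and using \eqref{eq:right-invariant-system} gives precisely \eqref{eq:lyapunov-equation} with the very same $K_\cdot$.

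There is no real obstacle here; the argument is essentially a factorization followed by pullback. The only mild subtlety worth flagging is that the choice of $\Phi_\fn$ is far from unique: any $\Phi_\fn R$ with $R\in\mathrm{SO}(n)$ satisfying $R\Sigma_\init R^\top = \Sigma_\init$ (for instance the identity, when no nontrivial stabilizer is invoked) would also work, reflecting the quotient nature of $\mathrm{Sym}^+(n) \cong \GL/\mathrm{O}(n)$; this freedom is immaterial for the controllability statement but explains why one direction of the implication is easy while the converse (controllability on $\mathrm{Sym}^+(n)$ $\Rightarrow$ controllability on $\GL$) would not be expected.
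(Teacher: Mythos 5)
Your proposal is correct and follows essentially the same route as the paper: pick $\Phi_\fn=\Sigma_\fn^{\frac12}\Sigma_\init^{-\frac12}\in\GL$, steer \eqref{eq:right-invariant-system} from $I$ to $\Phi_\fn$ using the hypothesis, and push forward via the congruence $\Sigma_t=\Phi_t\Sigma_\init\Phi_t^\top$, which satisfies \eqref{eq:lyapunov-equation} with the same gain $K_\cdot$. The remarks on positivity of the determinant and non-uniqueness of $\Phi_\fn$ are fine additions but do not change the argument.
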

\begin{proof}
    Let $\Sigma_\init $ and $\Sigma_\fn$ be any two points in $\mathrm{Sym}^+(n)$. Since \eqref{eq:right-invariant-system} is controllable, there exists $t_\fn>0$ and $K_\cdot:[0,t_\fn]\rightarrow\mathbb{R}^{m\times n}$ such that the corresponding trajectory of \eqref{eq:right-invariant-system} satisfies the initial and final conditions
    \begin{align*}
        \Phi_{0}&=I, \mbox{ and } \Phi_{t_\fn}=\Sigma_\fn^{\frac{1}{2}}\Sigma_\init ^{-\frac{1}{2}}.
    \end{align*}
    Define the curve
    \begin{align*}
        \Sigma_{\cdot}: t\mapsto \Phi_t \Sigma_\init  \Phi_t^\top,
    \end{align*}
    Through direct differentiation, we see that $\Sigma_{\cdot}$ satisfies
    \begin{align*}
        \dot{\Sigma}_{t}&= \dot{\Phi}_t\Sigma_\init \Phi_t^\top + \Phi_t\Sigma_\init \dot{\Phi}_t^\top\\
        &=(A+B K_t)\Phi_t\Sigma_\init \Phi_t^\top + \Phi_t\Sigma_\init \Phi_t^\top(A+B K_t)^\top,\\
        &=(A+B K_t)\Sigma_{t} + \Sigma_{t}(A+B K_t)^\top
    \end{align*}
    for some $K_\cdot:[0,t_\fn]\rightarrow\mathbb{R}^{m\times n}$, which implies that $\Sigma_{\cdot}$ is a trajectory of the Lyapunov equation \eqref{eq:lyapunov-equation}, and, by construction, satisfies the initial and final conditions
    \begin{align*}
        \Sigma_0&=\Sigma_\init ,  \mbox{ and } \Sigma_{t_\fn}=\Sigma_\fn.
    \end{align*}
    Since $\Sigma_\init $ and $\Sigma_\fn$ are arbitrary points in $\mathrm{Sym}^+(n)$, it follows that \eqref{eq:lyapunov-equation} is controllable.   
\end{proof}


\bibliographystyle{plain}
\bibliography{References}

\begin{thebibliography}{10}

\bibitem{abdelgalil2024sub}
Mahmoud Abdelgalil and Tryphon~T Georgiou.
\newblock Sub-{R}iemannian geometry, mixing, and the holonomy of optimal mass
  transport.
\newblock {\em arXiv preprint arXiv:2408.14707}, 2024.

\bibitem{agrachev2022control}
Andrei Agrachev and Andrey Sarychev.
\newblock Control on the manifolds of mappings with a view to the deep
  learning.
\newblock {\em Journal of Dynamical and Control Systems}, 28(4):989--1008,
  2022.

\bibitem{agrachev2009controllability}
Andrei~A Agrachev and Marco Caponigro.
\newblock Controllability on the group of diffeomorphisms.
\newblock In {\em Annales de l'Institut Henri Poincar{\'e} C, Analyse non
  lin{\'e}aire}, volume~26, pages 2503--2509. Elsevier, 2009.

\bibitem{ballantine1967products}
Charles Ballantine.
\newblock Products of positive definite matrices. {I}.
\newblock {\em Pacific Journal of Mathematics}, 23(3):427--433, 1967.

\bibitem{ballantine1968aproducts}
Charles Ballantine.
\newblock Products of positive definite matrices. {II}.
\newblock {\em Pacific Journal of Mathematics}, 24(1):7--17, 1968.

\bibitem{ballantine1968bproducts}
CS~Ballantine.
\newblock Products of positive definite matrices. {III}.
\newblock {\em Journal of Algebra}, 10(2):174--182, 1968.

\bibitem{brockett1972system}
Roger~W Brockett.
\newblock System theory on group manifolds and coset spaces.
\newblock {\em SIAM Journal on control}, 10(2):265--284, 1972.

\bibitem{brockett2007optimal}
Roger~W Brockett.
\newblock Optimal control of the {L}iouville equation.
\newblock {\em AMS IP Studies in Advanced Mathematics}, 39:23, 2007.

\bibitem{chen2015optimal}
Yongxin Chen, Tryphon~T Georgiou, and Michele Pavon.
\newblock Optimal steering of a linear stochastic system to a final probability
  distribution, {P}art {II}.
\newblock {\em IEEE Transactions on Automatic Control}, 61(5):1170--1180, 2015.

\bibitem{chen2016optimal}
Yongxin Chen, Tryphon~T Georgiou, and Michele Pavon.
\newblock Optimal transport over a linear dynamical system.
\newblock {\em IEEE Transactions on Automatic Control}, 62(5):2137--2152, 2016.

\bibitem{hirschorn1973topological}
Ronald Hirschorn.
\newblock {Topological semigroups, sets of generators, and controllability}.
\newblock {\em Duke Mathematical Journal}, 40(4):937 -- 947, 1973.

\bibitem{jurdjevic1997geometric}
Velimir Jurdjevic.
\newblock {\em Geometric control theory}.
\newblock Cambridge {U}niversity {P}ress, 1997.

\bibitem{jurdjevic1972control}
Velimir Jurdjevic and H{\'e}ctor~J Sussmann.
\newblock Control systems on {L}ie groups.
\newblock {\em Journal of Differential equations}, 12(2):313--329, 1972.

\bibitem{li2010ensemble}
Jr-Shin Li.
\newblock Ensemble control of finite-dimensional time-varying linear systems.
\newblock {\em IEEE Transactions on Automatic Control}, 56(2):345--357, 2010.

\bibitem{mccann1997convexity}
Robert~J McCann.
\newblock A convexity principle for interacting gases.
\newblock {\em Advances in Mathematics}, 128(1):153--179, 1997.

\bibitem{raginsky}
Maxim Raginsky.
\newblock Some remarks on controllability of the {L}iouville equation.
\newblock {\em arXiv preprint arXiv:2404.14683}, 2024.

\bibitem{singer2015lecture}
Isadore~Manuel Singer and John~A Thorpe.
\newblock {\em Lecture notes on elementary topology and geometry}.
\newblock Springer, 2015.

\bibitem{tabuada}
Paulo Tabuada and Bahman Gharesifard.
\newblock Universal approximation power of deep residual neural networks
  through the lens of control.
\newblock {\em IEEE Transactions on Automatic Control}, 68(5):2715--2728, 2023.

\bibitem{wustner2003connected}
Michael W{\"u}stner.
\newblock A connected {L}ie group equals the square of the exponential image.
\newblock {\em Journal of Lie Theory}, 13(1):307--309, 2003.

\end{thebibliography}

\end{document}